\theoremstyle{plain}
\newtheorem{theorem}{Theorem}[section]
\newtheorem{corollary}[theorem]{Corollary}
\newtheorem{definition}[theorem]{Definition}
\newtheorem{example}[theorem]{Example}
\newtheorem{lemma}[theorem]{Lemma}
\numberwithin{equation}{section}
\begin{document}

\title[Polynomial analogues]{Polynomial analogues of restricted multicolor 
$b$-ary partition functions}

\author{Karl Dilcher}
\address{Department of Mathematics and Statistics\\
         Dalhousie University\\
         Halifax, Nova Scotia, B3H 4R2, Canada}
\email{dilcher@mathstat.dal.ca}
\author{Larry Ericksen}
\address{P.O. Box 172, Millville, NJ 08332-0172, USA}
\email{LE22@cornell.edu}
\keywords{Binary partition, $b$-ary partition, restricted partition, 
polynomial analogue, generating function, recurrence relation, multinomial
coefficient}
\subjclass[2010]{Primary 11P81; Secondary 11B37, 11B83}
\thanks{Research supported in part by the Natural Sciences and Engineering
        Research Council of Canada, Grant \# 145628481}

\date{}

\setcounter{equation}{0}

\begin{abstract}
Given an integer base $b\geq 2$, a number $\rho\geq 1$ of colors, and a finite
sequence $\Lambda=(\lambda_1,\ldots,\lambda_\rho)$ of positive integers, we
introduce the concept of a $\Lambda$-restricted $\rho$-colored $b$-ary 
partition of an integer $n\geq 1$. We also define a sequence of polynomials in
$\lambda_1+\cdots+\lambda_\rho$ variables, and prove that the $n$th polynomial
characterizes all $\Lambda$-restricted $\rho$-colored $b$-ary partitions of 
$n$. In the process we define a recurrence relation for the polynomials in 
question, obtain explicit formulas and identify a factorization theorem.
\end{abstract}

\maketitle

\section{Introduction}

The study of binary partitions and partition functions originated with Euler 
\cite[p.~162ff.]{Eu}, but is still an active area of research. These concepts
were later extended to $b$-ary and restricted $b$-ary partitions for arbitrary
integer bases $b\geq 2$; some of the most important contributions were made,
in chronological order, by Mahler \cite{Ma}, Churchhouse \cite{Ch}, 
R{\o}dseth \cite{Ro}, Andrews \cite{An}, Reznick \cite{Re}, and Dumont et al.
\cite{DST}. Further details, putting these publication in perspective, can be 
found in \cite{DE11}.

More recently, R{\o}dseth and Sellers \cite{RS} introduced and studied $b$-ary
overpartitions, in analogy to ordinary overpartitions that had been introduced
a little earlier by Corteel and Lovejoy \cite{CL}. 
Ordinary overpartitions were extended to restricted multicolor partitions by 
Keith \cite{Ke}, and colored $b$-ary partitions have recently been studied by 
Ulas and {\.Z}mija \cite{UZ}.

In almost all the papers mentioned above, the authors obtained interesting and
important arithmetic properties of the partition functions in question,
including Ramanujan-type congruences. The purpose of this paper is quite 
different. Building on earlier work, especially on our recent paper \cite{DE11},
we define a sequence of multivariable polynomials that characterize all
individual partitions of an integer $n\geq 0$ (as opposed to just their numbers)
for a large class of restricted colored $b$-ary partitions.

To make the results of the current paper more transparent and easier to 
understand, we recall some earlier work, which may serve as an example.
Given integers $b\geq 2$ and $\lambda\geq 1$, the representations
\begin{equation}\label{1.1}
n = \sum_{j\geq 0}c_jb^j,\qquad c_j\in\{0,1,\ldots,\lambda\},
\end{equation}
of an integer $n\geq 0$ are restricted $b$-ary partitions, and their
numbers are denoted by $S_b^\lambda(n)$ in \cite{DE11}. In particular, 
$S_b^{b-1}(n)=1$ since it corresponds to the unique $b$-ary representation of
$n$. In the base 2 case, $S_2^2(n)$ counts the number of hyperbinary 
representations of $n$, and $S_2^2(n)=s(n+1)$, where $\{s(n)\}$ is the 
well-known Stern sequence; see \cite[p.~470]{Re}. Related to this, Bates and
Mansour \cite{BM} and independently Stanley and Wilf \cite{SW} introduced
polynomial analogues as refinements of $s(n+1)$; their coefficients count 
hyperbinary representations with certain properties. 

Recently the current authors \cite{DE7} refined this further by introducing 
the following sequence of bivariate polynomials.

\begin{definition}[\cite{DE7}, Definition~4.1]\label{def:1.1}
Let $s$ and $t$ be fixed positive integer parameters. We define the 
$2$-parameter generalized Stern polynomials in the variables $y$ and $z$ by 
$\omega_{s,t}(0;y,z)=0$, $\omega_{s,t}(1;y,z)=1$, and for $n\geq 1$ by
\begin{align}
\omega_{s,t}(2n;y,z) &=  y\,\omega_{s,t}(n;y^s,z^t),\label{1.2} \\
\omega_{s,t}(2n+1;y,z) &= z\,\omega_{s,t}(n;y^s,z^t)+\omega_{s,t}(n+1;y^s,z^t).
\label{1.3}
\end{align} 
\end{definition}

By comparing the recurrence relations \eqref{1.2}, \eqref{1.3} with those of
Stern's sequence, it is clear that $\omega_{s,t}(n;1,1)=s(n)$, independent of
the parameters $s,t$. The usefulness of the polynomials $\omega_{s,t}(n;y,z)$,
also relevant for the current paper, lies in the following characterization of
all hyperbinary expansions of a positive integer $n$, which we denote by
${\mathbb H}_n$.

\begin{theorem}[\cite{DE7}, Theorem~4.2]\label{thm:1.2}
For any integer $n\geq 1$ we have
\begin{equation}\label{1.4}
\omega_{s,t}(n+1;y,z)=\sum_{h\in{\mathbb H}_n}y^{p_h(s)}z^{q_h(t)},
\end{equation}
where for each hyperbinary expansion $h$ of $n$, the exponents $p_h(s)$,
$q_h(t)$ are polynomials in $s$ and $t$ respectively, with only $0$ and $1$ 
as coefficients. Furthermore,~if 
\begin{align}
p_h(s) &= s^{\alpha_1}+\dots+s^{\alpha_\mu},\quad
0\leq\alpha_1<\dots<\alpha_\mu,\quad \mu\geq 0, \label{1.5} \\
q_h(t) &= t^{\beta_1}+\dots+t^{\beta_\nu},\quad
0\leq\beta_1<\dots<\beta_\nu,\quad \nu\geq 0, \label{1.6}
\end{align}
then the hyperbinary expansion $h\in{\mathbb H}_n$ is given by
\begin{equation}\label{1.7}
n = 2^{\alpha_1}+\dots+2^{\alpha_\mu}+(2^{\beta_1}+2^{\beta_1})+\dots
+(2^{\beta_\nu}+2^{\beta_\nu}).
\end{equation}
\end{theorem}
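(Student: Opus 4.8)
The plan is to prove all three assertions simultaneously by induction on $n$, exploiting the recursive structure of both the polynomials $\omega_{s,t}(n;y,z)$ and the set ${\mathbb H}_n$ of hyperbinary expansions. The base case $n=1$ is immediate: $\omega_{s,t}(2;y,z)=y\,\omega_{s,t}(1;y^s,z^t)=y$, corresponding to the unique expansion $1=2^0$, with $\mu=1$, $\alpha_1=0$, and $\nu=0$. For the inductive step I would split according to the parity of $n$, mirroring the two defining recurrences \eqref{1.2} and \eqref{1.3}.

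First I would establish the combinatorial recursion for ${\mathbb H}_n$ that parallels Definition~\ref{def:1.1}. Writing a hyperbinary expansion of $n$ as $n=\sum_{j\ge 0}c_j 2^j$ with $c_j\in\{0,1,2\}$, one observes that $c_0$ is forced to have the same parity as $n$: if $n$ is even then $c_0\in\{0,2\}$, and if $n$ is odd then $c_0=1$. Removing the lowest digit and dividing by $2$ gives a bijection between ${\mathbb H}_{2m}$ and ${\mathbb H}_m\sqcup{\mathbb H}_{m}$ (the two copies recording whether $c_0=0$ or $c_0=2$), and between ${\mathbb H}_{2m+1}$ and ${\mathbb H}_m\sqcup{\mathbb H}_{m+1}$ — though I should double-check the edge behavior, since the "$c_0=2$ at the bottom" case for even $n$ contributes the deferred pair $(2^0+2^0)$ and must be tracked through the exponent substitution $y\mapsto y^s$, $z\mapsto z^t$. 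This digit-peeling bijection is the technical heart of the argument, and getting the bookkeeping of exponents to match the substitutions $y\mapsto y^s$, $z\mapsto z^t$ exactly right is where I expect the main obstacle to lie.

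Given the bijection, the inductive step for \eqref{1.4} is then a matter of matching terms. For $n+1=2m$, say, the inductive hypothesis gives $\omega_{s,t}(m;y,z)=\sum_{h\in{\mathbb H}_{m-1}}y^{p_h(s)}z^{q_h(t)}$, and applying \eqref{1.2} with the substitution yields $\omega_{s,t}(2m;y,z)=y\sum_{h}y^{p_h(s^{\,}\!)|_{s\to\cdots}}$; I would verify that substituting $y\to y^s$ has the effect of replacing each exponent monomial $s^{\alpha_i}$ by $s^{\alpha_i+1}$ at the level of the integer exponents, i.e. it shifts $\alpha_i\mapsto\alpha_i+1$, which is precisely the effect on \eqref{1.7} of multiplying the corresponding expansion by $2$ and then adding a new lowest bit. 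The odd case uses \eqref{1.3} and the two-term split ${\mathbb H}_m\sqcup{\mathbb H}_{m+1}$ analogously, with the $z$-term carrying the "$2^0+2^0$" pair. Finally, the claim that $p_h(s)$ and $q_h(t)$ have only $0$ and $1$ as coefficients — equivalently, that the $\alpha_i$ are distinct and the $\beta_i$ are distinct — follows because each inductive step either leaves the exponent set unchanged under a shift or introduces a single new exponent equal to $0$ (from the factors $y$ or $z$), and the shift $\alpha_i\mapsto\alpha_i+1$ moves every old exponent strictly above $0$, so no collision can occur; this also shows the decomposition \eqref{1.7} is well-defined and reconstructs $h$ uniquely from the pair $(p_h,q_h)$.
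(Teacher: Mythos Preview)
Your overall strategy---induction on $n$ via the defining recurrences \eqref{1.2}, \eqref{1.3}, matched against a digit-peeling bijection on hyperbinary expansions---is exactly the method the paper uses to prove its generalization, Theorem~\ref{thm:3.2} (see Section~5); Theorem~\ref{thm:1.2} itself is only cited from \cite{DE7} and not re-proved here. So the route is correct in spirit.

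However, your stated bijections are wrong, and this is not just a cosmetic slip. When $n=2m$ is even, the lowest digit satisfies $c_0\in\{0,2\}$; removing it and halving gives $(2m-0)/2=m$ or $(2m-2)/2=m-1$, so $\mathbb{H}_{2m}\cong\mathbb{H}_m\sqcup\mathbb{H}_{m-1}$, \emph{not} $\mathbb{H}_m\sqcup\mathbb{H}_m$. When $n=2m+1$ is odd, necessarily $c_0=1$, so $\mathbb{H}_{2m+1}\cong\mathbb{H}_m$ with no second summand at all. These corrected bijections are precisely what line up with the polynomial recurrences once you account for the shift $n\mapsto n+1$ in \eqref{1.4}: for $n=2m$ you apply \eqref{1.3} at argument $2m+1$, and the $z$-factor attaches to the $c_0=2$ branch (landing in $\mathbb{H}_{m-1}$) while the bare term attaches to $c_0=0$ (landing in $\mathbb{H}_m$); for $n=2m-1$ you apply \eqref{1.2} at argument $2m$, and the single $y$-term attaches to $c_0=1$ (landing in $\mathbb{H}_{m-1}$). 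With this correction in place, your argument that the substitution $y\mapsto y^s$, $z\mapsto z^t$ shifts every exponent $\alpha_i$, $\beta_i$ up by~$1$, and that the new prefactor $y$ or $z$ contributes a fresh exponent~$0$ not colliding with the shifted ones, goes through exactly as you describe and matches the paper's proof of the general case.
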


\begin{example}[\cite{DE7}, Example~8]\label{ex:1.3}
{\it With Definition~\ref{def:1.1} we compute}
\[
\omega_{s,t}(11;y,z) = z^{1+t^2}+y^sz^{t^2}+y^{s^2}z^{1+t}+y^{s^3}z+y^{s+s^3}.
\]
{\it If we choose the third term on the right, then according to 
Theorem~\ref{thm:1.2} the corresponding hyperbinary expansion will be 
$n=4+1+1+2+2$. The other four expansions can be ``read off" analogously.}
\end{example}

The sequence of polynomials $\omega_{s,t}(n;y,z)$ satisfies the following
generating function.

\begin{theorem}[\cite{DE7}, Theorem~4.4]\label{thm:1.4}
For integers $s,t\geq 1$ we have
\begin{equation}\label{1.8}
\sum_{n=1}^{\infty}\omega_{s,t}(n;y,z)q^n
= q\prod_{j=0}^{\infty}\left(1+y^{s^j}q^{2^j}+z^{t^j}q^{2^{j+1}}\right).
\end{equation}
\end{theorem}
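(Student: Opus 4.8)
The plan is to extract the generating-function identity directly from the recurrences \eqref{1.2}–\eqref{1.3}. Write $F(q)=\sum_{n\ge 1}\omega_{s,t}(n;y,z)q^n$, and split the sum according to parity of $n$. The even-indexed terms give $\sum_{n\ge 1}\omega_{s,t}(2n;y,z)q^{2n} = y\sum_{n\ge 1}\omega_{s,t}(n;y^s,z^t)q^{2n}$, which is exactly $y\,F_1(q^2)$ where $F_1$ is the same series but with $(y,z)$ replaced by $(y^s,z^t)$. The odd-indexed terms give $\sum_{n\ge 1}\omega_{s,t}(2n+1;y,z)q^{2n+1} = q\sum_{n\ge 1}\bigl(z\,\omega_{s,t}(n;y^s,z^t)+\omega_{s,t}(n+1;y^s,z^t)\bigr)q^{2n}$; the first piece is $zq\,F_1(q^2)$ and the second, after re-indexing $n\mapsto n-1$ and using $\omega_{s,t}(1;\cdot,\cdot)=1$, is $q^{-1}\bigl(F_1(q^2)\bigr)$ minus the $n=0$... more carefully, $\sum_{n\ge1}\omega_{s,t}(n+1;\cdot)q^{2n}=\sum_{m\ge2}\omega_{s,t}(m;\cdot)q^{2(m-1)}=q^{-2}\bigl(F_1(q^2)-q^2\bigr)$. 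Adding the $n=1$ base term $\omega_{s,t}(1;y,z)q=q$ separately and collecting everything yields a functional equation of the shape
\begin{equation*}
F(q) = q + \bigl(y + zq + q^{-1}\bigr)F_1(q^2) - q,
\end{equation*}
i.e. after cancellation $F(q) = \bigl(y+zq+q^{-1}\bigr)F_1(q^2)$, which rearranges to $F(q) = q^{-1}\bigl(1+yq+zq^2\bigr)F_1(q^2)$. Here $F_1(q)$ denotes $\sum_{n\ge1}\omega_{s,t}(n;y^s,z^t)q^n$; note this is \emph{not} simply $F$ with $q$ rescaled, because of the $s,t$ exponents on $y,z$.

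Next I would iterate this functional equation. Applying it once more to $F_1$ with the substitution $(y,z)\mapsto(y^s,z^t)$, $q\mapsto q^2$ gives $F_1(q^2)=q^{-2}\bigl(1+y^sq^2+z^tq^4\bigr)F_2(q^4)$, where $F_2$ carries exponents $(y^{s^2},z^{t^2})$. Substituting back, $F(q)=q^{-1}\bigl(1+yq+zq^2\bigr)\cdot q^{-2}\bigl(1+y^sq^2+z^tq^4\bigr)F_2(q^4)$. After $k$ steps one obtains
\begin{equation*}
F(q)=\Bigl(\textstyle\prod_{j=0}^{k-1} q^{-2^j}\bigl(1+y^{s^j}q^{2^j}+z^{t^j}q^{2^{j+1}}\bigr)\Bigr)\,F_k(q^{2^k}).
\end{equation*}
The accumulated power of $q$ in the denominator is $q^{-(2^k-1)}$, and $F_k(q^{2^k})=\omega_{s,t}(1;\cdot,\cdot)q^{2^k}+\cdots=q^{2^k}+O(q^{2^{k+1}})$. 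So the right-hand side is $q^{-(2^k-1)}\cdot q^{2^k}\prod_{j=0}^{k-1}(1+y^{s^j}q^{2^j}+z^{t^j}q^{2^{j+1}}) + (\text{higher order}) = q\prod_{j=0}^{k-1}(1+\cdots) + (\text{terms of order} > q^{2^k})$, using that the product of the $(1+\cdots)$ factors has constant term $1$. Letting $k\to\infty$ and noting that the discarded remainder has $q$-order tending to infinity, the identity \eqref{1.8} follows in the ring of formal power series.

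The step I expect to require the most care is the bookkeeping of the denominators $q^{-2^j}$ and the verification that the infinite product on the right of \eqref{1.8} converges as a formal power series (each power $q^n$ receives contributions from only finitely many factors, since the $j$th factor contributes terms of $q$-order at least $2^j$, so this is routine once stated). A secondary subtlety is making precise the sense in which $F_k(q^{2^k})$ can be replaced by its leading term $q^{2^k}$: one must check that the truncation error, when multiplied by the prefactor $q^{-(2^k-1)}\prod_{j<k}(1+\cdots)$, has $q$-order exceeding $2^k$, which holds because the next term of $F_k$ is of order $2^{k+1}$ and the prefactor has $q$-order $\ge 1$. With these two points settled, the argument is a clean induction; alternatively, one could avoid the iteration entirely and simply \emph{verify} that the product side satisfies the functional equation $G(q)=q^{-1}(1+yq+zq^2)G_1(q^2)$ together with $[q^1]G(q)=1$, and then invoke uniqueness of the solution to the recurrence—this may in fact be the shortest route and is the one I would write up.
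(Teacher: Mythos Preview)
Your derivation of the functional equation $F(q)=q^{-1}(1+yq+zq^2)\,F_1(q^2)$ from the recurrences \eqref{1.2}--\eqref{1.3} is correct, and the iteration to the infinite product is sound. One slip in the bookkeeping: you claim ``the prefactor has $q$-order $\ge 1$,'' but $q^{-(2^k-1)}\prod_{j<k}(1+\cdots)$ has $q$-order exactly $-(2^k-1)$. The correct count is that the truncation error $F_k(q^{2^k})-q^{2^k}$ has order $\ge 2^{k+1}$, so after multiplication the remainder has order $\ge 2^{k+1}-(2^k-1)=2^k+1\to\infty$; your conclusion is unchanged.

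The paper does not give its own proof of Theorem~\ref{thm:1.4} (it is quoted from \cite{DE7}), but its treatment of the general polynomials $\Omega_{b,T}^\Lambda$ in Section~4 runs in the \emph{reverse} direction: there the product form is taken as the definition (Definition~\ref{def:2.4}), and one peels off the $j=0$ factor to obtain the functional equation \eqref{4.8} (Corollary~\ref{cor:4.5}), from which the recurrence \eqref{4.4} is read off by equating coefficients. Specialized via \eqref{2.6} and Example~\ref{ex:4.2}, this is exactly the ``alternative'' route you sketch at the end --- verify that the product side satisfies the same functional equation and invoke uniqueness --- and, as you suspected, it is the shorter write-up. Your iterative argument has the virtue of making convergence in the formal power series ring explicit; the factor-and-reindex manipulation of the product is more compact but leaves that point implicit.
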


Finally, there is an explicit formula, for which
we first need to recall some notation. We consider binomial coefficients 
modulo~2 as follows:
\[
\binom{n}{k}^*\equiv \binom{n}{k}\pmod{2},\qquad \binom{n}{k}^*\in\{0,1\}.
\]
Furthermore, let $n=\sum_{j\geq 0}c_j2^j$, $c_j\in\{0,1\}$, be the binary 
expansion of an integer $n\geq 0$. Then for integers $t\geq 1$ we define
\begin{equation}\label{1.9}
d_t(n):=\sum_{j\geq 0}c_jt^j.
\end{equation}
We can now state a first version of the desired explicit formula.

\begin{lemma}[\cite{DE7}, Theorem~4.3]\label{lem:1.5}
For integers $s,t\geq 1$ and $n\geq 0$ we have
\begin{equation}\label{1.10}
\omega_{s,t}(n+1;y,z)
=\sum_{k=0}^{\lfloor\frac{n}{2}\rfloor}\binom{n-k}{k}^*y^{d_s(n-2k)}z^{d_t(k)}.
\end{equation}
\end{lemma}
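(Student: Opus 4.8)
The plan is to prove the explicit formula \eqref{1.10} by induction on $n$, using the defining recurrence \eqref{1.2}, \eqref{1.3} to pass from $n$ to $2n$ and $2n+1$. First I would record the two elementary facts about the arithmetic functions $d_s$ that make the induction go through: for any integer $m\geq 0$ one has $d_s(2m)=s\cdot d_s(m)$ (the binary digits of $2m$ are those of $m$ shifted up by one place), and $d_s(2m+1)=1+s\cdot d_s(m)$. Equivalently, if we replace $y$ by $y^s$ in the sum \eqref{1.10}, every exponent $d_s(n-2k)$ becomes $s\cdot d_s(n-2k)$, which is exactly what the substitution $y\mapsto y^s$ in the recurrence produces; the same remark applies to $z\mapsto z^t$ and $d_t$. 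This compatibility is the conceptual heart of the argument.

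Next I would verify the base cases. For $n=0$ the right-hand side of \eqref{1.10} is the single term $k=0$, giving $\binom{0}{0}^*y^{d_s(0)}z^{d_t(0)}=1=\omega_{s,t}(1;y,z)$; a similar one-line check handles $n=1$. For the inductive step, assume \eqref{1.10} holds for all arguments up to $n$, and split into the even and odd cases. In the even case, apply \eqref{1.2}: $\omega_{s,t}(2n+1;y,z)$ — wait, I must be careful with indexing — \eqref{1.2} gives $\omega_{s,t}(2m;y,z)=y\,\omega_{s,t}(m;y^s,z^t)$, so with $2m=n+1$ even, i.e. $n=2m-1$ odd, the induction hypothesis for $m$ together with the $d_s$, $d_t$ scaling identities above turns $y\cdot\sum_k\binom{m-1-k}{k}^*y^{s\,d_s(\cdots)}z^{t\,d_t(k)}$ into the claimed sum for $n=2m-1$, after checking that $y^{1+s\,d_s(m-1-2k)}=y^{d_s(2(m-1-2k)+1)}=y^{d_s(n-2k')}$ for the appropriate re-indexing $k'$, and that the binomial coefficients match modulo $2$. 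In the odd case $n+1=2m+1$, i.e. $n=2m$, apply \eqref{1.3}: the term $z\,\omega_{s,t}(m;y^s,z^t)$ produces the even-$k$ part of the target sum and the term $\omega_{s,t}(m+1;y^s,z^t)$ produces the odd-$k$ part, the merging of the two being governed by the Pascal-type relation $\binom{n-k}{k}^*\equiv\binom{(n-1)-k}{k}^*+\binom{(n-1)-(k-1)}{k-1}^*\pmod 2$.

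I expect the main obstacle to be bookkeeping rather than anything deep: one has to match up the ranges of summation $0\leq k\leq\lfloor n/2\rfloor$ before and after the doubling, track the shift $k\mapsto k$ versus $k\mapsto k-1$ coming from the two summands in \eqref{1.3}, and confirm that the $\pmod 2$ reductions of the binomial coefficients combine correctly via Pascal's rule — all while the variables are simultaneously undergoing $y\mapsto y^s$, $z\mapsto z^t$. An alternative, possibly cleaner route would be to derive \eqref{1.10} directly from the generating function \eqref{1.8}: expand the product $\prod_{j\geq0}(1+y^{s^j}q^{2^j}+z^{t^j}q^{2^{j+1}})$, note that choosing, in the $j$th factor, the term $y^{s^j}q^{2^j}$ contributes a $1$ to the $j$th binary digit of the "$y$-part" of the exponent while $z^{t^j}q^{2^{j+1}}$ contributes a $1$ to the $j$th binary digit of the "$z$-part", and then recognize the number of ways to obtain a given pair of exponents as a binomial coefficient mod $2$ via Kummer's theorem (carries in binary addition). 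Either way, the verification of the $d_s$-scaling identities and the Pascal/Kummer step is where the real content lies; everything else is routine.
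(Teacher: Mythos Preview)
The paper does not prove Lemma~\ref{lem:1.5}; it is quoted from \cite{DE7} without proof, so there is no argument to compare against. Your inductive plan is sound in outline, and the scaling identities $d_s(2m)=s\,d_s(m)$, $d_s(2m+1)=1+s\,d_s(m)$ are exactly the right ingredients. The generating-function alternative via \eqref{1.8} and Kummer's theorem is also viable and arguably cleaner.

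There is, however, a concrete error in your treatment of the case $n=2m$. The two summands coming from \eqref{1.3} do not need to be \emph{merged} by a Pascal-type rule: after the re-indexing $j=2k$ (from the $\omega_{s,t}(m+1;\cdot)$ term) and $j=2k+1$ (from the $z\,\omega_{s,t}(m;\cdot)$ term), they contribute to \emph{disjoint} sets of monomials, namely the even-$j$ and odd-$j$ parts of the target sum respectively (so you also have the two pieces swapped). What is actually required is Lucas' theorem, in the form
\[
\binom{2a}{2b}\equiv\binom{2a+1}{2b}\equiv\binom{2a+1}{2b+1}\equiv\binom{a}{b}\pmod 2,
\qquad
\binom{2a}{2b+1}\equiv 0\pmod 2,
\]
to check that $\binom{m-k}{k}^*=\binom{2m-2k}{2k}^*$ and $\binom{m-1-k}{k}^*=\binom{2m-2k-1}{2k+1}^*$. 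The same Lucas identities handle the case $n=2m-1$: they show that the odd-$j$ terms in the target sum vanish and that the surviving even-$j$ terms match. Your stated ``Pascal-type relation'' $\binom{n-k}{k}^*\equiv\binom{n-1-k}{k}^*+\binom{n-k}{k-1}^*$ is in fact false (take $n=4$, $k=2$), so that part of the plan would not go through as written; replace it with the Lucas argument and the induction closes.
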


As shown in \cite{DE11}, the identity \eqref{1.10} can be rewritten in a form 
that will extend more easily to a more general situation.

\begin{theorem}[\cite{DE11}, Equation (55)]\label{thm:1.6}
For integers $s,t\geq 1$ and $n\geq 0$ we have
\begin{equation}\label{1.11}
\omega_{s,t}(n+1;y,z)
=\sum_{k_1+2k_2=n}\binom{k_1+k_2}{k_2}^*y^{d_s(k_1)}z^{d_t(k_2)}.
\end{equation}
\end{theorem}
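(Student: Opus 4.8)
The plan is to derive \eqref{1.11} directly from the explicit formula \eqref{1.10} of Lemma~\ref{lem:1.5} by a change of summation index, so no new combinatorial input is needed. First I would set $k_2 = k$ and $k_1 = n-2k$ in \eqref{1.10}; then the summation condition $0 \le k \le \lfloor n/2\rfloor$ becomes exactly the condition that $k_1, k_2 \ge 0$ are integers with $k_1 + 2k_2 = n$, which is the index set in \eqref{1.11}. Under this substitution the binomial coefficient $\binom{n-k}{k}^*$ becomes $\binom{k_1+k_2}{k_2}^*$, since $n-k = (n-2k)+k = k_1+k_2$. The exponent $d_s(n-2k)$ becomes $d_s(k_1)$ and $d_t(k)$ becomes $d_t(k_2)$, matching the right-hand side of \eqref{1.11} term by term.

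The only point that requires a short argument is that each hyperbinary-type pair $(k_1,k_2)$ with $k_1+2k_2 = n$ corresponds to a unique $k$ in the original range and vice versa, i.e.\ that the map $k \mapsto (n-2k,\,k)$ is a bijection from $\{0,1,\dots,\lfloor n/2\rfloor\}$ onto $\{(k_1,k_2) \in \mathbb{Z}_{\ge 0}^2 : k_1 + 2k_2 = n\}$. This is immediate: given such a pair, $k_2$ is determined and $k_2 \le n/2$ forces $0 \le k_2 \le \lfloor n/2 \rfloor$, while $k_1 = n - 2k_2$ is then forced and nonnegative; conversely any $k$ in the range yields a valid pair. Hence the two sums have the same index set and identical summands, so they are equal.

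I do not anticipate a genuine obstacle here, since \eqref{1.11} is essentially a cosmetic reformulation of \eqref{1.10}. If one prefers a self-contained derivation not routed through Lemma~\ref{lem:1.5}, an alternative is to verify that the right-hand side of \eqref{1.11} satisfies the defining recurrences \eqref{1.2}, \eqref{1.3} together with the initial values $\omega_{s,t}(0;y,z)=0$, $\omega_{s,t}(1;y,z)=1$; the splitting of $k_2$ into even and odd parts, combined with Lucas's congruence for $\binom{k_1+k_2}{k_2}\bmod 2$ and the behaviour of $d_s, d_t$ under $k \mapsto 2k$ and $k \mapsto 2k+1$, reproduces \eqref{1.2} and \eqref{1.3}. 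But the index-substitution argument above is the shortest route, so that is the one I would present.
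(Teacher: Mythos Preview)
Your proposal is correct and matches the paper's own treatment: the paper does not give a separate proof of Theorem~\ref{thm:1.6} but simply states that, as shown in \cite{DE11}, the identity \eqref{1.10} ``can be rewritten'' in the form \eqref{1.11}, which is precisely the index substitution $k_2=k$, $k_1=n-2k$ you carry out. Your verification of the bijection and the matching of each factor is exactly what is needed to justify that remark.
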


Finally, the polynomials $\omega_{s,t}(n;y,z)$ also satisfy various identities,
an example of which will follow. For greater ease of notation we drop the 
subscripts $s, t$ and just write $\omega(n;y,z)$. We present a special case of 
Theorem~4.2 in \cite{DE10}.

\begin{theorem}\label{thm:1.7}
Let $\ell\geq 1$ and $k\geq 1$ be integers. Then for all integers $j$ with 
$0\leq j\leq 2^\ell$ we have 
\begin{align}
\omega(k\cdot 2^\ell+j;y,z)&=\omega(k+1;y^{s^\ell},z^{t^\ell})\cdot\omega(j;y,z)\label{1.12} \\
&\quad+\omega(k;y^{s^\ell},z^{t^\ell})\cdot\big(\omega(2^\ell+j;y,z)
-y^{s^\ell}\omega(j;y,z)\big).\nonumber
\end{align}
\end{theorem}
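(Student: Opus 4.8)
The plan is to prove the identity \eqref{1.12} by induction on $\ell$, using the defining recurrences \eqref{1.2}, \eqref{1.3} as the main engine. First I would establish the base case $\ell=1$ directly: here $j\in\{0,1,2\}$, and each of the three instances reduces to a short computation with \eqref{1.2}, \eqref{1.3}. For $j=0$ and $j=2$ one uses the even-index recurrence on $\omega(2k;y,z)$ and $\omega(2k+2;y,z)=\omega(2(k+1);y,z)$; for $j=1$ one uses the odd-index recurrence on $\omega(2k+1;y,z)=z\,\omega(k;y^s,z^t)+\omega(k+1;y^s,z^t)$. In each case the claimed right-hand side of \eqref{1.12} collapses after noting that $\omega(0;y,z)=0$, $\omega(1;y,z)=1$, and $\omega(2;y,z)=y$, so that the correction term $\omega(2^\ell+j;y,z)-y^{s^\ell}\omega(j;y,z)$ behaves as expected.

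For the inductive step, assume \eqref{1.12} holds for some $\ell\geq 1$ (for all $k\geq 1$ and all admissible $j$), and prove it for $\ell+1$. The key observation is that $k\cdot 2^{\ell+1}+j$ with $0\leq j\leq 2^{\ell+1}$ splits according to the parity of $j$: write $j=2i$ or $j=2i+1$ with $0\leq i\leq 2^\ell$, so that $k\cdot 2^{\ell+1}+j$ equals $2(k\cdot 2^\ell+i)$ or $2(k\cdot 2^\ell+i)+1$. Applying \eqref{1.2} or \eqref{1.3} peels off one factor of $2$, turning $\omega(k\cdot2^{\ell+1}+j;y,z)$ into an expression in $\omega(k\cdot2^\ell+i;y^s,z^t)$ and, in the odd case, $\omega(k\cdot2^\ell+i+1;y^s,z^t)$. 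Now I can invoke the induction hypothesis at level $\ell$ with the variables $y^s,z^t$ in place of $y,z$ (note $(y^s)^{s^\ell}=y^{s^{\ell+1}}$, so the substituted parameters match those in \eqref{1.12} at level $\ell+1$). Collecting terms and again using \eqref{1.2}, \eqref{1.3} in reverse to recombine the pieces $\omega(i;y^s,z^t)$, $\omega(i+1;y^s,z^t)$, $\omega(2^\ell+i;y^s,z^t)$ into $\omega(j;y,z)$ and $\omega(2^{\ell+1}+j;y,z)$ should yield exactly \eqref{1.12} at level $\ell+1$.

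The main obstacle I anticipate is the bookkeeping at the two boundary values $j=0$ and $j=2^{\ell+1}$, and more generally making sure the recombination in the inductive step is done correctly when $i$ hits its extreme values $0$ or $2^\ell$ — precisely the cases where the ``correction'' structure of the right-hand side of \eqref{1.12} was designed to absorb an off-by-one in the index of the first argument. One should check carefully that, e.g., when $j=2^{\ell+1}$ and hence $i=2^\ell$, the identity $\omega(k\cdot2^\ell+2^\ell;y,z)=\omega((k+1)\cdot2^\ell;y,z)$ is consistent with evaluating the formula, which forces a telescoping between the $k$ and $k+1$ contributions. A cleaner alternative, which I would pursue if the direct recombination becomes unwieldy, is to verify \eqref{1.12} by comparing coefficients via the combinatorial characterization: by Theorem~\ref{thm:1.2} each side is a generating polynomial over hyperbinary expansions, and a hyperbinary expansion of $k\cdot2^\ell+j-1$ (for $0\le j\le 2^\ell$) decomposes uniquely into a ``low part'' contributing digits below $2^\ell$ and a ``high part'' that is a hyperbinary expansion of $k$ or $k+1$, scaled by $2^\ell$; matching this decomposition against the three terms on the right of \eqref{1.12} — with the subtracted term $y^{s^\ell}\omega(j;y,z)$ removing the double-counted configurations where the high part uses $k$ but the carry into position $\ell$ would force a digit pattern already counted by the $\omega(2^\ell+j;y,z)$ term — gives \eqref{1.12} directly. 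Since \eqref{1.12} is quoted from \cite{DE10} as a special case of a more general result, either route is essentially a matter of organizing these standard manipulations.
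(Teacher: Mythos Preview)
The paper does not actually prove Theorem~\ref{thm:1.7}; it is quoted as a special case of Theorem~4.2 in \cite{DE10}, with no argument given. So there is no proof in the paper to compare against directly. That said, your inductive strategy is sound and goes through cleanly. In the base case $\ell=1$ your three checks are correct (for $j=1$ one uses $\omega(3;y,z)=z+y^s$, so the correction term becomes $z+y^s-y^s=z$). In the inductive step, the recombination you describe is in fact entirely routine: in the even case $j=2i$ you multiply the induction hypothesis by $y$ and use \eqref{1.2} to recognize $y\,\omega(i;y^s,z^t)=\omega(2i;y,z)$ and $y\,\omega(2^\ell+i;y^s,z^t)=\omega(2^{\ell+1}+2i;y,z)$; in the odd case $j=2i+1$ you apply the hypothesis at both $i$ and $i+1$ (both lie in $[0,2^\ell]$) and use \eqref{1.3} to collapse $z\,\omega(i;y^s,z^t)+\omega(i+1;y^s,z^t)=\omega(2i+1;y,z)$, and similarly for the $2^\ell+i$ terms. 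The boundary cases $j=0$ and $j=2^{\ell+1}$ fall under the even case with $i=0$ and $i=2^\ell$ and require no special treatment, so your anticipated obstacle does not materialize.

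For context, the closest argument the paper does carry out is the proof of Theorem~\ref{thm:7.2}, a factorization identity for the more general polynomials $\Omega_{b,T}^\Lambda$. That proof also proceeds by induction on $\ell$, peeling off one base-$b$ digit via the recurrence \eqref{4.4} and applying the induction hypothesis with the shifted variables $Z^T$ --- structurally the same mechanism you propose, though in a simpler setting where the restriction on $j$ forces a single term in the recurrence and hence a pure product rather than the two-term form of \eqref{1.12}. Your combinatorial alternative via Theorem~\ref{thm:1.2} would also work, but the algebraic induction is shorter and is the route taken in \cite{DE10}.
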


All this, from Definition~\ref{def:1.1} to Theorem~\ref{thm:1.7}, has been 
generalized, first to $b$-ary multivariable Stern polynomials in \cite{DE10}, 
and then further to polynomial analogues of restricted $b$-ary partition 
functions; see \cite{DE11}. For general $b\geq 2$, some phenomena become 
visible that are trivial in the case $b=2$. For instance, the general case of 
\eqref{1.12} leads to a factorization result that is meaningful only for 
$b\geq 3$; see Corollary~4.3 in \cite{DE10}.

It is the purpose of this paper to obtain results for polynomial analogues of
restricted colored $b$-ary partition functions, extending 
Theorems~\ref{thm:1.2}, \ref{thm:1.4}, \ref{thm:1.6}, and~\ref{thm:1.7}. We
begin by defining, in Section~2, the concepts of restricted colored $b$-ary
partitions, the corresponding partition functions, and their polynomial 
analogues. Section~3 contains the main result of this paper, namely a 
representation theorem that connects the restricted colored $b$-ary partitions
of Definition~\ref{def:2.1} with the multivariate polynomials of 
Definition~\ref{def:2.4}. In Section~4 we derive a recurrence relation for 
these polynomials, which we use in Section~5 to prove the representation 
theorem. In Sections~6 and~7 we obtain an explicit formula and a factorization
result, respectively, for the multivariate polynomials; we then end with some
further remarks and examples in Section~8.

\section{Colored $b$-ary partition functions}

The concepts of a restricted or unrestricted colored $b$-ary partition can be
seen as an extension of $b$-ary overpartitions that were introduced by 
R{\o}dseth and Sellers \cite{RS}. According to their definition, a $b$-ary
{\it overpartition} of an integer $n\geq 1$ is a sequence of nonnegative integer
powers whose sum is $n$, and where the first occurrence of a power $b$ may be
overlined.

This means that each $b$-ary partition can have two types (or ``colors") of
parts, namely the ``basic" type which may occur an arbitrary number of times, 
and an ``overlined" type, which may occur at most once. This leads to the
idea of assigning a fixed number $\rho\geq 1$ of colors in a more symmetric
fashion, that is, such that all colors are restricted.

\begin{definition}\label{def:2.1}
Let $b\geq 2$ and $\rho\geq 1$ be integers, and let
$\Lambda=(\lambda_1,\ldots,\lambda_\rho)$ be a finite integer sequence with
$1\leq\lambda_1\leq\cdots\leq\lambda_\rho$. Then a $\Lambda$-restricted 
$\rho$-colored $b$-ary partition of an integer $n\geq 1$ is a set
of powers $b^j$ whose sum is $n$, and where for a given integer 
$j\geq 0$, up to $\lambda_r$ parts $b^j$ can be assigned the $r$th color,
for $1\leq r\leq\rho$.
\end{definition}

In other words, each part $b^j$ can be assigned the $\ell$th color at most
$\lambda_\ell$ times, for $1\leq\ell\leq\rho$. 
If $C_b^\Lambda(n)$ denotes the number of partitions of $n$ as defined in 
Definition~\ref{def:2.1}, then the generating function is
\begin{equation}\label{2.1}
\sum_{n=0}^\infty C_b^\Lambda(n)q^n
= \prod_{j=0}^\infty\bigg(1+q^{b^j}+\cdots+q^{\lambda_1\cdot b^j}\bigg)\cdots
\bigg(1+q^{b^j}+\cdots+q^{\lambda_\rho\cdot b^j}\bigg). 
\end{equation}

We now illustrate Definition~\ref{def:2.1} with a concrete example.

\begin{example}\label{ex:2.2}
{\rm Let $b=2$ and $\Lambda=(2,3)$, and thus $\rho=2$. Then the 
$\Lambda$-restricted 2-colored binary partitions of $n=4$ are}

$4_2,\;4_1,\;2_2+2_2,\;2_2+2_1,\;2_1+2_1,\;2_2+1_2+1_2,\;2_2+1_2+1_1,\;
2_2+1_1+1_1,$

$2_1+1_2+1_2,\;2_1+1_2+1_1,\; 2_1+1_1+1_1,\;1_2+1_2+1_2+1_1,\;
1_2+1_2+1_1+1_1,$

\noindent
{\rm where the subscripts denote the colors of the part, and by convention we 
write each partition in non-increasing order of its parts, and assign the colors
also in non-increasing order. This is consistent with the notation of colored 
ordinary partitions used by Keith \cite{Ke}. Counting the partitions in this
example, we see that $C_2^\Lambda(4)=13$. On the other hand, by \eqref{2.1} we 
have}
\begin{align*}
\sum_{n=0}^\infty C_2^\Lambda(n)q^n
&= \prod_{j=0}^\infty\bigg(1+q^{2^j}+q^{2\cdot 2^j}\bigg)
\bigg(1+q^{2^j}+q^{2\cdot 2^j}+q^{3\cdot 2^j}\bigg) \\
&= 1+2q+5q^2+7q^3+13q^4+17q^5+26q^6+\ldots,
\end{align*}
{\rm which confirms that $C_2^\Lambda(4)=13$.}
\end{example}

When $\rho=1$, Definition~\ref{def:2.1} reduces to that of usual restricted 
$b$-ary partitions as in \eqref{1.1}. Their numbers were denoted by
$S_b^\lambda(n)$ in \cite{DE11}, with generating function
\begin{equation}\label{2.2}
\sum_{n=0}^\infty S_b^\lambda(n)q^n
= \prod_{j=0}^\infty\bigg(1+q^{b^j}+q^{2\cdot b^j}+\cdots+q^{\lambda\cdot b^j}\bigg).
\end{equation}
When $\Lambda=(\lambda)$, then for all $n\geq 0$ we obviously have 
$C_b^\Lambda(n)=S_b^\lambda(n)$. The generating function \eqref{2.2} was then
used in \cite{DE11} as the basis for the following definition of a polynomial
analogue.

\begin{definition}[\cite{DE11}, Definition~1]\label{def:2.3}
Let $b\geq 2$ and $\lambda\geq 1$ be integers, $T=(t_1,\ldots,t_\lambda)$ be
a $\lambda$-tuple of positive integer parameters, and $Z=(z_1,\ldots,z_\lambda)$
a $\lambda$-tuple of variables. Then we define the sequence of 
$\lambda$-variable polynomials $\omega_{b,T}^\lambda(n;Z)$ by the generating
function
\begin{equation}\label{2.3}
\sum_{n=0}^\infty \omega_{b,T}^\lambda(n+1;Z)q^n
=\prod_{j=0}^\infty\left(1+z_1^{t_1^j}q^{b^j}+z_2^{t_2^j}q^{2\cdot b^j}+\dots
+z_\lambda^{t_\lambda^j}q^{\lambda\cdot b^j}\right).
\end{equation}
\end{definition}

We immediately see that this extends both \eqref{1.8} and 
\eqref{2.2}. Indeed, if $T=(s,t)$, $Z=(y,z)$, and $b=\lambda=2$, then 
$\omega_{2,T}^2(n;Z)=\omega_{s,t}(n;y,z)$. Also, when 
$Z=(1,\ldots,1)$ and
$T$ is arbitrary, then for any $b\geq 2$ and $\lambda\geq 1$ we have
\[
\omega_{b,T}^\lambda(n+1;(1,\ldots,1)) = S_b^\lambda(n).
\]
With \eqref{2.1} and \eqref{2.3} in mind, we are now ready to define the main
object of study in this paper.

\begin{definition}\label{def:2.4}
Let $b\geq 2$ and $\rho\geq 1$ be integers, and let
$\Lambda=(\lambda_1,\ldots,\lambda_\rho)$ be a finite integer sequence with
$1\leq\lambda_1\leq\cdots\leq\lambda_\rho$. Furthermore, let 
$\lambda:=\lambda_1+\cdots+\lambda_\rho$ and let 
$T=(t_{1,1},\ldots,t_{1,\lambda_1};\ldots;t_{\rho,1},\ldots,t_{\rho,\lambda_\rho})$ 
be a $\lambda$-tuple of positive integer parameters, and 
$Z=(z_{1,1},\ldots,z_{1,\lambda_1};\ldots;z_{\rho,1},\ldots,z_{\rho,\lambda_\rho})$
a $\lambda$-tuple of variables. Then we define the sequence of
$\lambda$-variable polynomials $\Omega_{b,T}^\Lambda(n;Z)$ by the generating
function
\begin{align}
\sum_{n=0}^\infty \Omega_{b,T}^\Lambda(n;Z)q^n
=\prod_{j=0}^\infty&\left(1+z_{1,1}^{t_{1,1}^j}q^{b^j}+\dots
+z_{1,\lambda_1}^{t_{1,\lambda_1}^j}q^{\lambda_1\cdot b^j}\right)\cdots\label{2.4}\\
&\qquad\left(1+z_{\rho,1}^{t_{\rho,1}^j}q^{b^j}+\dots
+z_{\rho,\lambda_\rho}^{t_{\rho,\lambda_\rho}^j}q^{\lambda_\rho\cdot b^j}\right).\nonumber
\end{align}
\end{definition}

Suppose that in $Z$ we have $z_{i,j}=1$ for all relevant indices $i,j$. Then for
arbitrary $T$ we have
\begin{equation}\label{2.5}
\Omega_{b,T}^\Lambda(n;(1,\ldots,1)) = C_b^\Lambda(n),\qquad n=0,1,\ldots.
\end{equation}
When $\rho=1$, we set $\Lambda=(\lambda)$ and $t_{1,i}=t_i$, $z_{1,i}=z_i$ for
$1\leq i\leq\lambda$; then comparing \eqref{2.4} with \eqref{2.3} gives
\begin{equation}\label{2.6}
\Omega_{b,T}^\Lambda(n;Z) = \omega_{b,T}^\lambda(n+1;Z),\qquad \rho=1,\quad
n=0,1,\ldots.
\end{equation}
The argument $n+1$ on the right of this last identity is due to the fact that
the polynomials $\omega_{b,T}^\lambda(n+1;Z)$ came from the study of
generalized Stern polynomials; see also Definition~\ref{def:1.1}.

We conclude this section by illustrating Definition~\ref{def:2.4} with an
example that can be seen as a continuation of Example~\ref{ex:2.2}.

\begin{center}
\begin{table}[h]
{\renewcommand{\arraystretch}{1.5}
\begin{tabular}{|l||l|}
\hline
$n$  & $\Omega_{b,T}^{\Lambda}(n;Z)$ \\
\hline\hline
$0$    &  $1$ \\
\hline
$1$    &  $z_{1}+y_{1}$ \\
\hline
$2$    &  $z_{1}^{t_1}+z_{2}+z_{1} y_{1}+y_{1}^{s_1}+y_{2}$ \\
\hline
$3$    &  $z_{1}^{1+t_1}+z_{3}+z_{1}^{t_1} y_{1}+z_{2} y_{1}+z_{1} y_{1}^{s_1}+y_{1}^{1+s_1}+z_{1} y_{2}$ \\
\hline
$4$    &  $z_{1}^{t_1^2}+z_{1}^{t_1} z_{2}+z_{2}^{t_{2}}+z_{1}^{1+t_1} y_{1}+z_{3} y_{1}+z_{1}^{t_1} y_{1}^{s_1}$ \\
$$ & $+z_{2} y_{1}^{s_1}+y_{1}^{s_{1}^2}+z_{1} y_{1}^{1+s_1}+z_{1}^{t_1} y_{2}+z_{2} y_{2}+y_{1}^{s_1} y_{2}+y_{2}^{s_2}$ \\
\hline
$5$ & $y_1^{1+s_1^2}+y_1 y_2^{s_2}+y_1^{s_1^2} z_1+y_1^{s_1} y_2 z_1+y_2^{s_2} z_1+y_1^{1+{s_1}} z_1^{t_1}+y_1 z_1^{t_1^2}+y_1^{s_1} z_1^{1+{t_1}}$\\
$$ &  $+y_2 z_1^{1+t_1}+z_1^{1+t_1^2}+y_1^{1+{s_1}} z_2+y_1 z_1^{t_1} z_2+y_1 z_2^{t_2}+z_1 z_2^{t_2}+y_1^{s_1} z_3+y_2 z_3+z_1^{t_1} z_3$ \\
\hline
\end{tabular}}
\medskip
\caption{$\Omega_{b,T}^{\Lambda}(n;Z)$ as in Example~\ref{ex:2.5}, for 
$0\le n \le 5$.}
\label{tabpolJ01V12123P}
\end{table}
\end{center}
\vspace{-4ex}

\begin{example}\label{ex:2.5}
{\rm Let $b=2$ and $\Lambda=(2,3)$. To avoid double indices, we set
$T=(s_1,s_2;t_1,t_2,t_3)$ and $Z=(y_1,y_2;z_1,z_2,z_3)$. Then the generating
function \eqref{2.4} becomes}
\begin{equation}\label{2.7}
\sum_{n=0}^\infty\Omega_{b,T}^\Lambda(n;Z)q^n
=\prod_{j =0}^\infty\bigg(1+y_1^{s_1^j}q^{2^j}+y_2^{s_2^j}q^{2\cdot 2^j}\bigg)
\bigg(1+z_1^{t_1^j}q^{2^j} + z_2^{t_2^j}q^{2\cdot 2^j}
+ z_3^{t_3^j}q^{3 \cdot 2^j}\bigg).
\end{equation}

{\rm The first few polynomials, obtained by expanding the right-hand side of
\eqref{2.7}, are shown in Table~1.
Note that the number of terms of the polynomials in Table~1 are 1, 2, 5, 7, 13,
and 17, respectively. This is consistent with \eqref{2.5} and 
Example~\ref{ex:2.2}.}
\end{example}

We conclude this section with a result that is easily obtained by combining
Definitions~\ref{def:2.3} and~\ref{def:2.4}; it will be useful later, in 
Section~6.

\begin{theorem}\label{thm:2.6}
Let $b\geq 2$ and $\rho\geq 1$ be integers, and let $\Lambda$, $T$, and $Z$ be
as in Definition~\ref{def:2.4}. Furthermore, for each $\ell=1,\ldots,\rho$ let
\[
T_\ell=(t_{\ell,1},\ldots,t_{\ell,\lambda_\ell})\quad\hbox{and}\quad
Z_\ell=(z_{\ell,1},\ldots,z_{\ell,\lambda_\ell}).
\]
Then for each integer $n\geq 0$ we have
\begin{equation}\label{2.8}
\Omega_{b,T}^\Lambda(n;Z) = \sum_{n_1+\cdots+n_\rho=n}
\omega_{b,T_1}^{\lambda_1}(n_1+1;Z_1)\cdots
\omega_{b,T_\rho}^{\lambda_\rho}(n_\rho+1;Z_\rho).
\end{equation}
\end{theorem}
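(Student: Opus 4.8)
The plan is to prove \eqref{2.8} by manipulating the defining generating function \eqref{2.4} and factoring it according to the $\rho$ colors. The key observation is that the product on the right-hand side of \eqref{2.4} is, by its very form, a product of $\rho$ separate subproducts, one for each color $\ell=1,\ldots,\rho$, where the $\ell$th subproduct is
\[
\prod_{j=0}^\infty\left(1+z_{\ell,1}^{t_{\ell,1}^j}q^{b^j}+\dots
+z_{\ell,\lambda_\ell}^{t_{\ell,\lambda_\ell}^j}q^{\lambda_\ell\cdot b^j}\right).
\]
By Definition~\ref{def:2.3}, with $\lambda$ replaced by $\lambda_\ell$, $T$ by $T_\ell$, and $Z$ by $Z_\ell$, this $\ell$th subproduct is exactly the generating function $\sum_{m=0}^\infty\omega_{b,T_\ell}^{\lambda_\ell}(m+1;Z_\ell)q^m$. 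Thus \eqref{2.4} can be rewritten as
\[
\sum_{n=0}^\infty \Omega_{b,T}^\Lambda(n;Z)q^n
=\prod_{\ell=1}^\rho\left(\sum_{m=0}^\infty\omega_{b,T_\ell}^{\lambda_\ell}(m+1;Z_\ell)q^m\right).
\]

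The second step is to expand the product of the $\rho$ power series on the right and compare coefficients of $q^n$. Multiplying out $\rho$ formal power series in $q$, the coefficient of $q^n$ in the product is precisely the sum over all ways of writing $n=n_1+\cdots+n_\rho$ with each $n_\ell\geq 0$, of the product of the respective coefficients $\omega_{b,T_\ell}^{\lambda_\ell}(n_\ell+1;Z_\ell)$. Matching this with the coefficient of $q^n$ on the left, which is $\Omega_{b,T}^\Lambda(n;Z)$ by definition, yields \eqref{2.8} directly.

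The only point requiring a small amount of care is the justification that these rearrangements are legitimate at the level of formal power series — that the infinite products converge in the formal sense (each has constant term $1$, and for fixed power of $q$ only finitely many factors contribute, since the $j$th factor contributes only terms of degree at least $b^j$ in $q$), and that the Cauchy product of finitely many formal power series is well defined. This is entirely routine and could be dispatched in one sentence, since all the relevant series are power series in $q$ with coefficients in the polynomial ring over $\mathbb{Z}$ in the variables $Z$. I do not anticipate any genuine obstacle here; the main content is simply recognizing that the $\rho$-fold product structure of \eqref{2.4} is already aligned with the $\rho$ colors, so that \eqref{2.8} is essentially a restatement of \eqref{2.4} once Definition~\ref{def:2.3} is invoked for each color.
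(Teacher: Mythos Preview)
Your proposal is correct and follows essentially the same approach as the paper: recognize that the right-hand side of \eqref{2.4} is the product of $\rho$ generating functions of the form \eqref{2.3}, and then equate coefficients of $q^n$ to obtain the $\rho$-fold convolution \eqref{2.8}. The paper's proof is terser and omits the formal-power-series justification you included, but the argument is identical in substance.
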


\begin{proof}
We use the fact that the right-hand side of \eqref{2.4} is the product of 
$\rho\geq 1$ generating functions of the type that occurs on the right-hand side
of the generating function \eqref{2.3}. Equating coefficients of $q^n$, we see
that the higher-order convolution on the right-hand side of \eqref{2.8} comes
from the product of $\rho$ appropriate power series from the left of 
\eqref{2.3}, while the left-hand side of \eqref{2.8} comes from the left of
the identity \eqref{2.4}.
\end{proof}

We note that in the special case $\rho=1$, the identity \eqref{2.8} reduces 
to \eqref{2.6}.

\section{A representation theorem}


The main reason for introducing polynomial analogues for the Stern and $b$-ary 
Stern sequences \cite{DE7, DE9} is the fact that they can be used to 
characterize all hyperbinary and hyper $b$-ary representations. 
Theorem~\ref{thm:1.2} and Example~\ref{ex:1.3} may serve as an illustration 
of this. More generally, in \cite{DE11} we used polynomial analogues for 
restricted $b$-ary partition functions to characterize all restricted $b$-ary 
partitions of a given integer.

In the present section we will extend these representation theorems to
restricted {\it colored} $b$-ary partitions. We begin with an example.

\begin{example}\label{ex:3.1}
{\rm As in Example~\ref{ex:2.2} we let $b=2$ and $\Lambda=(2,3)$. This time we
consider the $\Lambda$-restricted 2-colored binary partitions of $n=3$, namely}
\begin{equation}\label{3.1}
2_2+1_2, 2_2+1_1, 2_1+1_2, 2_1+1_1, 1_2+1_2+1_2, 1_2+1_2+1_1, 1_2+1_1+1_1.
\end{equation}
{\rm On the other hand, from Table~1 we have}
\begin{equation}\label{3.2}
\Omega_{2,T}^\Lambda(3;Z) = z_{1}^{1+t_1}+z_{3}+z_{1}^{t_1} y_{1}+z_{2} y_{1}
+z_{1} y_{1}^{s_1}+y_{1}^{1+s_1}+z_{1} y_{2}.
\end{equation}
{\rm We rewrite this polynomial in the notation of Definition~\ref{def:2.4},
namely with $T=(t_{1,1},t_{1,2};t_{2,1},t_{2,2},t_{2,3})$ and
$Z=(z_{1,1},z_{1,2};z_{2,1},z_{2,2},z_{2,3})$. Then \eqref{3.2} becomes}
\begin{equation}\label{3.3}
\Omega_{2,T}^\Lambda(3;Z) = z_{2,1}^{1+t_{2,1}}+z_{2,3}+z_{2,1}^{t_{2,1}}z_{1,1}
+z_{2,2}z_{1,1}+z_{2,1}z_{1,1}^{t_{1,1}}+z_{1,1}^{1+t_{1,1}}
+z_{2,1}z_{1,2}.
\end{equation}
{\rm We assign each of the 
$C_2^\Lambda(3)=7$ partitions in \eqref{3.1} to one of the seven monomials in
\eqref{3.3} as shown in Table~2.}
\begin{center}
\begin{table}[h]
{\renewcommand{\arraystretch}{1.3}
\begin{tabular}{|c|c|c|}
\hline
{\rm Monomial} & {\rm Partition} & {\rm Rewritten} \\
\hline
$z_{2,1}^{1+t_{2,1}}$ & $1\cdot 2_2^0+1\cdot 2_2^1$ & $2_2+1_2$ \\
$z_{2,3}$ & $3\cdot 2_2^0$ & $1_2+1_2+1_2$ \\
$z_{2,1}^{t_{2,1}}z_{1,1}$ & $1\cdot 2_2^1+1\cdot 2_1^0$ & $2_2+1_1$ \\
$z_{2,2}z_{1,1}$ & $2\cdot 2_2^0+1\cdot 2_1^0$ & $1_2+1_2+1_1$ \\
$z_{2,1}z_{1,1}^{t_{1,1}}$ & $1\cdot 2_2^0+1\cdot 2_1^1$ & $2_1+1_2$ \\
$z_{1,1}^{1+t_{1,1}}$ & $1\cdot 2_1^0+1\cdot 2_1^1$ & $2_1+1_1$ \\
$z_{2,1}z_{1,2}$ & $1\cdot 2_2^0+2\cdot 2_1^0$ & $1_2+1_1+1_1$ \\
\hline
\end{tabular}}
\medskip
\caption{The monomials in \eqref{3.3}.}
\end{table}
\end{center}
\vspace{-8ex}
\end{example}
The one-to-one correspondence shown in Table~2 is a special case of the 
following representation theorem. Recall that $C_b^\Lambda(n)$ is the number
of $\Lambda$-restricted colored $b$-ary partitions of $n$; see 
Definition~\ref{def:2.1}.


\begin{theorem}\label{thm:3.2}
Let $b\geq 2$ and $\rho\geq 1$ be integers, and let the finite sequences 
$\Lambda$, $T$, and $Z$ be as in Definition~\ref{def:2.4}. 

\smallskip
\noindent
$(a)$ For any integer $n\geq 1$ we have
\begin{equation}\label{3.4}
\Omega_{b,T}^\Lambda(n;Z) 
= \sum_{h=1}^{C_b^\Lambda(n)}\prod_{\ell=1}^\rho
z_{\ell,1}^{p_{h,\ell,1}(t_{\ell,1})}\cdots 
z_{\ell,\lambda_\ell}^{p_{h,\ell,\lambda_\ell}(t_{\ell,\lambda_\ell})},
\end{equation}
where for each $h=1,\ldots,C_b^\Lambda(n)$, the exponents
$p_{h,\ell,i}(t_{\ell,i})$, $1\leq\ell\leq\rho$ and $1\leq i\leq\lambda_\ell$, 
are polynomials with only $0$ and $1$ as coefficients.

\smallskip
\noindent
$(b)$ We fix an $h=1,\ldots, C_b^\Lambda(n)$, and for each pair $(\ell,i)$ with
$1\leq\ell\leq\rho$ and $1\leq i\leq\lambda_\ell$ we write
\begin{equation}\label{3.5}
p_{h,\ell,i}(t_{\ell,i}) = t_{\ell,i}^{\tau_{\ell,i}(1)}
+t_{\ell,i}^{\tau_{\ell,i}(2)}+\cdots+t_{\ell,i}^{\tau_{\ell,i}(\mu_{\ell,i})},
\quad \mu_{\ell,i}\geq 0.
\end{equation}
Let the exponents $\tau_{\ell,i}(j)$, $j=1,2,\ldots,$ form a strictly 
increasing finite sequence of nonnegative integers, and by convention we let
$\mu_{\ell,i}=0$ correspond to $p_{h,\ell,i}(t_{\ell,i})=0$. Then the 
$\Lambda$-restricted colored $b$-ary partition of $n$ corresponding to the 
index-$h$ monomial on the right-hand side of \eqref{3.4} is
\begin{equation}\label{3.6}
\sum_{\ell=1}^{\rho}\sum_{i=1}^{\lambda_\ell}
\bigg(i\cdot b_{\ell}^{\tau_{\ell,i}(1)}+\cdots
+i\cdot b_{\ell}^{\tau_{\ell,i}(\mu_{\ell,i})}\bigg),
\end{equation}
where the subscript $\ell$ in $b_\ell$ indicates the $\ell$th color of the
part in question.
\end{theorem}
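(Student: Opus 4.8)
The plan is to reduce Theorem~\ref{thm:3.2} to the already-known $\rho=1$ case by exploiting the product structure of the generating function \eqref{2.4}, which is exactly what Theorem~\ref{thm:2.6} packages for us. First I would invoke the $\rho=1$ representation theorem from \cite{DE11} (the colored analogue of Theorems~\ref{thm:1.2} and~\ref{thm:1.6} stated there): for each $\ell$, the polynomial $\omega_{b,T_\ell}^{\lambda_\ell}(n_\ell+1;Z_\ell)$ is a sum of monomials $z_{\ell,1}^{p_{\ell,1}(t_{\ell,1})}\cdots z_{\ell,\lambda_\ell}^{p_{\ell,\lambda_\ell}(t_{\ell,\lambda_\ell})}$, one for each restricted $b$-ary partition of $n_\ell$ in which part $b^j$ carries weight $i$ precisely when $t_{\ell,i}^j$ appears in $p_{\ell,i}$; the exponent polynomials have $0$–$1$ coefficients, and reading off the exponents via \eqref{3.5}–\eqref{3.6} (with a single color $\ell$) recovers that partition. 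This is the base case I will lean on.

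Next I would substitute the expansion \eqref{2.8} from Theorem~\ref{thm:2.6}: $\Omega_{b,T}^\Lambda(n;Z)$ is the sum over compositions $n_1+\cdots+n_\rho=n$ of the products $\prod_{\ell=1}^\rho \omega_{b,T_\ell}^{\lambda_\ell}(n_\ell+1;Z_\ell)$. Since the variable sets $Z_\ell$ are pairwise disjoint (distinct double indices), multiplying out produces monomials that are honest products $\prod_{\ell=1}^\rho(\text{monomial in }Z_\ell)$, with no cancellation or collision of variables across colors; hence the exponent of each $z_{\ell,i}$ in a term of the product is exactly the exponent coming from the $\ell$th factor, so it is still a $0$–$1$-coefficient polynomial in $t_{\ell,i}$. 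This gives part~(a) with the monomials indexed by $h$ ranging over all such products; what remains for the enumeration is to check that the total count is $C_b^\Lambda(n)$, which follows either by specializing all $z_{\ell,i}=1$ and using \eqref{2.5}, or—better for the bijective statement—by the combinatorial description below.

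For part~(b), I would argue that a term of \eqref{2.4} is determined by choosing, independently in each factor $\ell$ and each index $j\ge 0$, at most one of the summands $z_{\ell,i}^{t_{\ell,i}^j}q^{i\cdot b^j}$; such a choice contributes a part $b^j$ of weight $i$ and color $\ell$, and the constraint ``at most one summand per $(\ell,j)$'' is precisely the rule that color $\ell$ is used on $b^j$ with some single multiplicity $i\le\lambda_\ell$—matching Definition~\ref{def:2.1} once one observes that allowing ``up to $\lambda_\ell$ parts $b^j$ of color $\ell$'' is the same data as picking the multiplicity $i\in\{0,1,\dots,\lambda_\ell\}$. The power of $q$ accumulated by a choice is $\sum_{\ell,j} i_{\ell,j}\,b^j$, so contributions to $q^n$ correspond bijectively to $\Lambda$-restricted $\rho$-colored $b$-ary partitions of $n$; tracking the $Z$-variables through the same choice yields exactly the monomial whose exponents, decoded by \eqref{3.5}, give the partition \eqref{3.6} (the $i$ in front of $b_\ell^{\tau_{\ell,i}(r)}$ recording that the part $b^{\tau_{\ell,i}(r)}$ is taken with multiplicity $i$ in color $\ell$). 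Composing this with the product expansion identifies the composition $(n_1,\dots,n_\rho)$ as the color-wise subsums of the partition, closing the bijection.

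The main obstacle I expect is bookkeeping rather than conceptual: one must be careful that the map ``partition $\mapsto$ monomial'' is genuinely well defined and injective, i.e. that two different $(\ell,i,j)$ data never produce the same monomial. Injectivity within a single color is the content of the $\rho=1$ theorem; across colors it is guaranteed by disjointness of the variable blocks $Z_\ell$. The only place where a little care is needed is the possibility that a part $b^j$ is \emph{not} used in color $\ell$ (the ``$1$'' summand in the $\ell$th factor), which corresponds to $i=0$ and contributes no variable—this is why the $\mu_{\ell,i}=0$ convention in \eqref{3.5} is precisely what is needed, and why the indices $i$ in \eqref{3.6} run only over those $(\ell,i)$ with $\mu_{\ell,i}\ge 1$. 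Once this is spelled out, the theorem follows by concatenating the $\rho$ color-wise decodings guaranteed by the base case.
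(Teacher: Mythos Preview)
Your argument is correct, but it takes a different route from the paper's proof. The paper proves Theorem~\ref{thm:3.2} by induction on $n$, using the recurrence relation of Theorem~\ref{thm:4.3}: the base case handles $n=0,1,\dots,b-1$ via \eqref{5.1}--\eqref{5.3}, and the inductive step matches the combinatorial construction of partitions of $bn+j$ from partitions of $n-k$ (with all powers of $b$ raised by one) together with a ``unit layer'' partition of $kb+j$, against the terms $Y_{bk+j}\,\Omega(n-k;Z^T)$ in \eqref{4.4}. You instead reduce directly to the already-established $\rho=1$ case (Theorem~4 of \cite{DE11}) via the convolution identity \eqref{2.8}, using the evident bijection between $\Lambda$-restricted $\rho$-colored $b$-ary partitions of $n$ and $\rho$-tuples of single-color $\lambda_\ell$-restricted $b$-ary partitions with total sum $n$; disjointness of the variable blocks $Z_\ell$ ensures that the product of the color-wise monomials faithfully records the colored partition. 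Your approach is shorter and more transparent, since it isolates exactly where the multicolor case adds nothing new beyond bookkeeping; the paper's approach, by contrast, is more self-contained for the colored setting and, by first establishing Theorem~\ref{thm:4.3}, yields the recurrence relations as an independent computational tool (used later, e.g., in Section~7).
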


When $\rho=1$, Theorem~\ref{thm:3.2} becomes a special case of Theorem~4 in
\cite{DE11}. By further restricting our attention to $\lambda_1=\lambda=b$, we
obtain Theorem~11 in \cite{DE9}; the case $b=2$ of this is presented as
Theorem~\ref{thm:1.2} above.

For the proof of Theorem~\ref{thm:3.2} we require recurrence relations, which
will be derived in the next section. We conclude the present section with an 
example.

\begin{example}\label{ex:3.3}
{\rm Once again we take $b=2$ and $\Lambda=(2,3)$. From among the 17 terms of
the entry for $\Omega_{2,T}^{\Lambda}(5;Z)$ in Table~1 we choose two monomials 
and rewrite them in terms of $T=(t_{1,1},t_{1,2};t_{2,1},t_{2,2},t_{2,3})$ and
$Z=(z_{1,1},z_{1,2};z_{2,1},z_{2,2},z_{2,3})$, as in Example~\ref{ex:3.1}.

First we consider
\[
y_1^{1+s_1^2} = z_{1,1}^{1+t_{1,1}^2}.
\]
Using \eqref{3.5} and \eqref{3.6}, we see that this monomial corresponds to
the colored binary partition $1\cdot 2_1^0+1\cdot 2_1^2$, which we rewrite as 
$4_1+1_1$.

Next we consider the somewhat more interesting monomial
\[
y_1z_1^{t_1}z_2 = z_{1,1}\cdot z_{2,1}^{t_{2,1}}\cdot z_{2,2}.
\]
Using \eqref{3.5} and \eqref{3.6} again, we get the partition
$1\cdot 2_1^0+1\cdot 2_2^1+2\cdot 2_2^0$, which we rewrite as $2_2+1_2+1_2+1_1$.

Finally, we note that the correspondence between \eqref{3.5} and \eqref{3.6} 
also leads to the entries of Table~2.}
\end{example}

\section{Recurrence relations}

In this section we generalize the recurrence relations given by \eqref{1.2} and
\eqref{1.3}, and show that the polynomials $\Omega_{b,T}^\Lambda(n;Z)$ satisfy
similar, though vastly extended recurrence relations. In the case of polynomial
analogues of ordinary restricted $b$-ary partition functions, i.e., when 
$\rho=1$, we obtained such recurrence relations in \cite{DE11}. Rewriting the
relevant theorem in terms of the polynomials $\Omega_{b,T}^\Lambda(n;Z)$ by 
using the identity \eqref{2.6}, we have the following result.

\begin{theorem}[\cite{DE11}, Theorem~5]\label{thm:4.1}
Let $b\geq 2$ and $\lambda\geq 1$ be integers, and set $\Lambda=(\lambda)$, so
that $\rho=1$. Furthermore, let
\[
Z=(z_1,\ldots,z_\lambda),\quad T=(t_1,\ldots,t_\lambda),\quad
Z^T=(z_1^{t_1},\ldots,z_\lambda^{t_\lambda}).
\]
Then $\Omega_{b,T}^\Lambda(0;Z)=1$, and for integers $n\geq 0$ we have
\begin{equation}\label{4.1}
\Omega_{b,T}^\Lambda(bn+j;Z) = \sum_{k=0}^{\lfloor\lambda/b\rfloor}
z_{kb+j}\Omega_{b,T}^\Lambda(n-k;Z^T)\quad (j=0,\ldots,b-1),
\end{equation}
with the conventions that $z_0=1$, $z_\mu=0$ for $\mu>\lambda$, and
$\Omega_{b,T}^\Lambda(m;Z)=0$ if $m<0$.
\end{theorem}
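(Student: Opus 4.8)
The plan is to work directly with the generating function \eqref{2.4}, exploiting its self-similarity under the substitution $q\mapsto q^b$. By \eqref{2.6} the single-colour case of Definition~\ref{def:2.4} (equivalently \eqref{2.3}) gives
\[
F(q;Z) := \sum_{n=0}^\infty\Omega_{b,T}^\Lambda(n;Z)q^n
= \prod_{j=0}^\infty\Bigl(1+\sum_{m=1}^\lambda z_m^{t_m^j}q^{mb^j}\Bigr).
\]
Taking constant terms on both sides gives $\Omega_{b,T}^\Lambda(0;Z)=1$, so it remains to establish \eqref{4.1}.

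First I would isolate the $j=0$ factor, writing
\[
F(q;Z)=\Bigl(1+\sum_{m=1}^\lambda z_mq^m\Bigr)\prod_{j=1}^\infty
\Bigl(1+\sum_{m=1}^\lambda z_m^{t_m^j}q^{mb^j}\Bigr).
\]
The heart of the argument is to recognize the remaining product as $F(q^b;Z^T)$: re-indexing $j\mapsto j+1$ and using $z_m^{t_m^{j+1}}=(z_m^{t_m})^{t_m^j}$ together with $q^{mb^{j+1}}=(q^b)^{mb^j}$, the factor at index $j+1$ in the variables $Z$ becomes exactly the factor at index $j$ in the variables $Z^T=(z_1^{t_1},\dots,z_\lambda^{t_\lambda})$ with $q$ replaced by $q^b$, the parameter tuple $T$ being unchanged. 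Hence
\[
F(q;Z)=\Bigl(1+\sum_{m=1}^\lambda z_mq^m\Bigr)F(q^b;Z^T),
\]
a functional equation equivalent to the asserted recurrence.

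To finish, I would extract the coefficient of $q^{bn+j}$ for $0\le j\le b-1$ from both sides. Expanding $F(q^b;Z^T)=\sum_{N\ge0}\Omega_{b,T}^\Lambda(N;Z^T)q^{bN}$ and distributing, a monomial $z_m\cdot\Omega_{b,T}^\Lambda(N;Z^T)q^{bN+m}$ (including the $m=0$, $z_0=1$ term arising from the leading ``$1$'') contributes to $q^{bn+j}$ precisely when $bN+m=bn+j$; since $0\le j\le b-1$ this forces $m\equiv j\pmod b$, say $m=kb+j$ with $k\ge0$, and then $N=n-k$. Collecting contributions yields
\[
[q^{bn+j}]F(q;Z)=\sum_{k\ge0}z_{kb+j}\,\Omega_{b,T}^\Lambda(n-k;Z^T),
\]
and the conventions $z_0=1$ (for the $k=0$, $j=0$ term), $z_\mu=0$ for $\mu>\lambda$, and $\Omega_{b,T}^\Lambda(m;Z)=0$ for $m<0$ let us truncate the sum to $k=0,\dots,\lfloor\lambda/b\rfloor$, exactly as in \eqref{4.1}.

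I expect the one genuinely delicate point to be the re-indexing/self-similarity step: one must verify carefully that raising each $z_m$ to the power $t_m$ (and $q$ to the power $b$) converts the tail product $\prod_{j\ge1}$ back into a product of the \emph{same} shape indexed by $j\ge0$, with the parameters $t_m$ held fixed — this is precisely the mechanism producing the substitution $Z\mapsto Z^T$ rather than some other substitution. The remaining bookkeeping (sorting monomials by residue mod $b$ and checking that the stated summation range together with the zero conventions reproduces the coefficient verbatim) is routine.
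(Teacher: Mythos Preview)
Your proof is correct and follows precisely the method the paper itself uses. Although the paper does not prove Theorem~\ref{thm:4.1} separately (it is quoted from \cite{DE11}), the proof of the more general Theorem~\ref{thm:4.3} proceeds exactly as you do: split off the $j=0$ factor of the infinite product, recognize the tail as $F(q^b;Z^T)$ via the substitution $z_m\mapsto z_m^{t_m}$, $q\mapsto q^b$, and then equate coefficients of $q^{bn+j}$ after sorting by residue classes modulo $b$; specializing that argument to $\rho=1$ gives your proof verbatim.
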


\begin{example}\label{ex:4.2}
{\rm Let $\lambda=b=2$. Then, writing out the cases $j=0$ and $j=1$ separately,
the identity \eqref{4.1} reduces to
\begin{align}
\Omega_{2,T}^\Lambda(2n;z_1,z_2)&=z_0\Omega_{2,T}^\Lambda(n;z_1^{t_1},z_2^{t_2})
+z_2\Omega_{2,T}^\Lambda(n-1;z_1^{t_1},z_2^{t_2}),\label{4.2}\\
\Omega_{2,T}^\Lambda(2n+1;z_1,z_2)&=z_1\Omega_{2,T}^\Lambda(n;z_1^{t_1},z_2^{t_2})
+z_3\Omega_{2,T}^\Lambda(n-1;z_1^{t_1},z_2^{t_2})\label{4.3}.
\end{align}
In this case, i.e., when $\Lambda=(2)$, we have 
$\Omega_{2,T}^\Lambda(n;z_1,z_2)=\omega_{t_1,t_2}(n+1;z_1,z_2)$, and also
$z_0=1$ and $z_3=0$. Therefore \eqref{4.2} and \eqref{4.3} are identical with
\eqref{1.3} and \eqref{1.2}, respectively.}
\end{example}

We now extend Theorem~\ref{thm:4.1} to polynomial analogues of 
$\Lambda$-restricted {\it colored} $b$-ary partition functions.

\begin{theorem}\label{thm:4.3}
Let $b\geq 2$ and $\rho\geq 1$ be integers, and let the finite sequences
$\Lambda$, $T$, and $Z$ be as in Definition~\ref{def:2.4}. 
Then $\Omega_{b,T}^\Lambda(0;Z)=1$, and for integers $n\geq 0$ we have
\begin{equation}\label{4.4}
\Omega_{b,T}^\Lambda(bn+j;Z) 
= \sum_{k=0}^{\lfloor\lambda/b\rfloor}Y_{bk+j}\Omega_{b,T}^\Lambda(n-k;Z^T)
\qquad (j=0,\ldots b-1),
\end{equation}
with the convention that $\Omega_{b,T}^\Lambda(m;Z)=0$ if $m<0$. The 
coefficients $Y_\nu$, $0\leq\nu\leq\lambda$, are given by 
\begin{equation}\label{4.5}
Y_\nu = \sum z_{1,i_1}z_{2,i_2}\cdots z_{\rho,i_\rho},\qquad
z_{1,0}=z_{2,0}=\cdots=z_{\rho,0}=1,
\end{equation}
with the sum taken over all $i_1,i_2,\ldots,i_\rho$ with $i_1+\cdots+i_\rho=\nu$
and $0\leq i_\ell\leq\lambda_\ell$ for $\ell=1,\ldots,\rho$. In particular,
$Y_0=1$, $Y_1=z_{1,1}+z_{2,1}+\cdots+z_{\rho,1}$, 
$Y_\lambda = z_{1,\lambda_1}z_{2,\lambda_2}\cdots z_{\rho,\lambda_\rho}$, and
$Y_\mu=0$ when $\mu>\lambda$.
\end{theorem}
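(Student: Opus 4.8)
The plan is to work directly with the defining generating function \eqref{2.4}, mimicking the strategy behind Theorem~\ref{thm:4.1}. Write $F(q;Z):=\sum_{n\ge 0}\Omega_{b,T}^\Lambda(n;Z)q^n$ and let $P_j(q)$ denote the $j$th factor in the product on the right of \eqref{2.4}, so that $F(q;Z)=\prod_{j\ge 0}P_j(q)$. Since the constant term of each $P_j$ equals $1$, the statement $\Omega_{b,T}^\Lambda(0;Z)=1$ is immediate. First I would isolate the factor
\[
P_0(q)=\prod_{\ell=1}^\rho\big(1+z_{\ell,1}q+\cdots+z_{\ell,\lambda_\ell}q^{\lambda_\ell}\big)
\]
and split $F(q;Z)=P_0(q)\cdot\prod_{j\ge 0}P_{j+1}(q)$.

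The key observation is a substitution identity: performing the simultaneous replacement $q\mapsto q^b$ and $z_{\ell,i}\mapsto z_{\ell,i}^{t_{\ell,i}}$ in $P_j(q)$ sends the monomial $z_{\ell,i}^{t_{\ell,i}^j}q^{i b^j}$ to $\big(z_{\ell,i}^{t_{\ell,i}}\big)^{t_{\ell,i}^j}(q^b)^{ib^j}=z_{\ell,i}^{t_{\ell,i}^{j+1}}q^{i b^{j+1}}$, which is precisely the corresponding monomial of $P_{j+1}(q)$. Hence $\prod_{j\ge 0}P_{j+1}(q)=F(q^b;Z^T)$, and therefore $F(q;Z)=P_0(q)\,F(q^b;Z^T)$. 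Next I would expand the finite product $P_0(q)$: choosing the term $z_{\ell,i_\ell}q^{i_\ell}$ (with the convention $z_{\ell,0}=1$) from the $\ell$th factor and collecting powers of $q$ shows that $P_0(q)=\sum_{\nu=0}^{\lambda}Y_\nu q^\nu$ with $Y_\nu$ exactly as in \eqref{4.5}. The stated special values $Y_0=1$, $Y_1=z_{1,1}+\cdots+z_{\rho,1}$, $Y_\lambda=z_{1,\lambda_1}\cdots z_{\rho,\lambda_\rho}$, and the vanishing $Y_\mu=0$ for $\mu>\lambda$, then follow by inspecting which index tuples $(i_1,\dots,i_\rho)$ with $0\le i_\ell\le\lambda_\ell$ sum to $\nu$.

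Finally I would equate coefficients of $q^{m}$ in $F(q;Z)=\big(\sum_{\nu}Y_\nu q^\nu\big)F(q^b;Z^T)$. Writing $m=bn+j$ with $0\le j\le b-1$, the coefficient $[q^{m-\nu}]F(q^b;Z^T)$ vanishes unless $b\mid(m-\nu)$, that is, unless $\nu\equiv j\pmod b$; putting $\nu=bk+j$ gives $(m-\nu)/b=n-k$, so that $[q^{m-\nu}]F(q^b;Z^T)=\Omega_{b,T}^\Lambda(n-k;Z^T)$. This yields \eqref{4.4}, where the range of the sum is controlled by the conventions $Y_\nu=0$ for $\nu>\lambda$ and $\Omega_{b,T}^\Lambda(m;Z)=0$ for $m<0$. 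There is no serious obstacle; the one point requiring care is the bookkeeping in the substitution identity for $\prod_{j\ge 0}P_{j+1}(q)$—one must check that the exponent of $z_{\ell,i}$ really becomes $t_{\ell,i}^{j+1}$ under the combined substitution—together with the index shift $\nu=bk+j$ when extracting coefficients. Alternatively, one could deduce \eqref{4.4} from Theorem~\ref{thm:2.6} by applying Theorem~\ref{thm:4.1} to each of the $\rho$ factors and reorganizing the resulting convolution, but the direct approach above is cleaner.
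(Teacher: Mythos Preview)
Your proposal is correct and follows essentially the same approach as the paper: isolate the $j=0$ factor $P_0(q)$ of the product in \eqref{2.4}, identify the remaining product as $F(q^b;Z^T)$ via the substitution $q\mapsto q^b$, $Z\mapsto Z^T$, expand $P_0(q)=\sum_\nu Y_\nu q^\nu$, and then equate coefficients of $q^{bn+j}$. The only cosmetic difference is that the paper writes out the regrouping $\sum_\nu Y_\nu q^\nu=\sum_k q^{kb}\sum_j Y_{kb+j}q^j$ explicitly before performing the Cauchy product, whereas you handle the congruence condition $\nu\equiv j\pmod b$ directly when extracting coefficients.
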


When $\rho=1$, we see that $Y_\nu=z_{1,\nu}=z_\nu$ for $0\leq\nu\leq\lambda$,
and so Theorem~\ref{thm:4.3} reduces to Theorem~\ref{thm:4.1}. Before proving
Theorem~\ref{thm:4.3}, we present an example.

\begin{example}\label{ex:4.4}
{\rm Continuing with Examples~\ref{ex:2.5} and~\ref{ex:3.1}, we let $b=2$ and
$\Lambda=(2,3)$, with $Z=(y_1,y_2;z_1,z_2,z_3)$. For the sake of simplicity
we delete the subscripts and superscripts of $\Omega$; Theorem~\ref{thm:4.3}
then gives
\begin{align}
\Omega(2n;Z) 
&= Y_{0}\Omega(n;Z^T) + Y_{2}\Omega(n-1;Z^T) +Y_{4}\Omega(n-2;Z^T),\label{4.6}\\
\Omega(2n+1;Z) &=   
Y_{1}\Omega(n;Z^T) + Y_{3}\Omega(n-1;Z^T) + Y_{5}\Omega(n-2;Z^T),\label{4.7}
\end{align}
where $Y_0=1$, $Y_1=y_1+z_1$, $Y_2=z_2+y_1z_1+y_2$, $Y_3=z_3+y_1z_2+y_2z_1$,
$Y_4=y_1z_3+y_2z_2$, and $Y_5=y_2z_3$.
As a specific example we take \eqref{4.7} with $n=2$. Then
\begin{align*}
\Omega(5;Z) &= (z_1+y_1)\cdot\Omega(2;Z^T)+(z_3+y_1z_2+y_2z_1)\cdot\Omega(1;Z^T)
+y_2z_3\cdot\Omega(0;Z^T)\\
&= (z_1+y_1)\cdot(z_1^{t_1^2}+z_2^{t_2}+z_1^{t_1}y_1^{s_1}+y_1^{s_1^2}+y_2^{s_2})\\
&\qquad+(z_3+y_1z_2+y_2z_1)\cdot(z_1^{t_1}+y_1^{s_1})+y_2z_3,
\end{align*}
where we have used the first three entries in Table~1. Expanding the right-hand
side of this last identity, we obtain the entry for $n=5$ in Table~1.}
\end{example}

\begin{proof}[Proof of Theorem~\ref{thm:4.3}]
For ease of notation we delete subscripts and superscripts from $\Omega$.
By manipulating the infinite product in \eqref{2.4}, we get
\begin{align*}
\sum_{m=0}^\infty\Omega(m;Z)q^m
&=\prod_{j=0}^\infty\prod_{\ell=1}^\rho
\left(1+z_{\ell,1}^{t_{\ell,1}^j}q^{b^j}+\cdots
+z_{\ell,\lambda_\ell}^{t_{\ell,\lambda_\ell}^j}q^{\lambda_\ell\cdot b^j}\right)\\
&=\prod_{\ell=1}^\rho\left(
1+z_{\ell,1}q+\cdots+z_{\ell,\lambda_\ell}q^{\lambda_\ell}\right)\\
&\qquad\cdot\prod_{j=0}^\infty\prod_{\ell=1}^\rho\left(
1+z_{\ell,1}^{t_{\ell,1}^{j+1}}q^{b^{j+1}}+\cdots
+z_{\ell,\lambda_\ell}^{t_{\ell,\lambda_ell}^{j+1}}q^{\lambda_\ell\cdot b^{j+1}}\right)\\
&=\big(1+Y_1 q+\cdots+Y_\lambda q^\lambda\big) \\
&\qquad\cdot\prod_{j=0}^\infty\prod_{\ell=1}^\rho\left(
1+(z_{\ell,1}^{t_{\ell,1}})^{t_{\ell,1}^j}(q^b)^{b^j}+\cdots
+(z_{\ell,\lambda_\ell}^{t_{\ell,\lambda}})^{t_{\ell,\lambda_\ell}^j}
(q^b)^{\lambda_\ell\cdot b^j}\right).
\end{align*}
We then obtain, again with \eqref{2.4},
\begin{equation}\label{4.8}
\sum_{m=0}^\infty\Omega(m;Z)q^m
=\big(1+Y_1 q+\cdots+Y_\lambda q^\lambda\big)
\sum_{\nu=0}^\infty\Omega(\nu;Z^T)q^{\nu b}. 
\end{equation}
With the conventions $Y_0=1$ and $Y_\mu=0$ for $\mu>\lambda$, we get
\begin{equation}\label{4.9}
1+Y_1 q+\cdots+Y_\lambda q^\lambda
= \sum_{k=0}^{\lfloor\lambda/b\rfloor}\sum_{j=0}^{b-1}Y_{kb+j} q^{kb+j}
= \sum_{k=0}^{\lfloor\lambda/b\rfloor}q^{kb}\sum_{j=0}^{b-1}Y_{kb+j}q^j.
\end{equation}
Next we write $m=bn+j$, $0\leq j\leq b-1$, in \eqref{4.8} and substitute
\eqref{4.9} into \eqref{4.8}. Then \eqref{4.8} becomes
\begin{align}
\sum_{n=0}^\infty\sum_{j=0}^{b-1}&\Omega(bn+j;Z)q^{bn+j}\label{4.10}\\
&=\bigg(\sum_{k=0}^{\lfloor\lambda/b\rfloor}q^{kb}\sum_{j=0}^{b-1}Y_{bk+j}q^j\bigg)
\cdot\sum_{\nu=0}^\infty\Omega(\nu;Z^T)q^{\nu b}\nonumber \\
&=\sum_{n=0}^\infty\sum_{j=0}^{b-1}
\bigg(\sum_{k=0}^{\lfloor\lambda/b\rfloor}Y_{bk+j}\Omega(n-k;Z^T)\bigg)q^{nb+j},\nonumber
\end{align}
where we have used a Cauchy product, with $k+\nu=n$. Finally, by equating
coefficients of $q^{nb+j}$ in the first and the third line of \eqref{4.10},
we immediately get the desired identity \eqref{4.4}.
\end{proof}

We finish this section by explicitly stating a functional equation that was 
derived as part of the proof of Theorem~\ref{thm:4.3}. We set
\begin{equation}\label{4.11}
F(Z,q) := \sum_{m=0}^{\infty}\Omega_{b,T}^\Lambda(m;Z);
\end{equation}
then the identity \eqref{4.8} can be rewritten as follows.

\begin{corollary}\label{cor:4.5}
Let $b\geq 2$ and $\rho\geq 1$ be integers, let the finite sequences $\Lambda$,
$T$, and $Z$ be as in Definition~\ref{def:2.4}, and the polynomials $Y_\nu$,
$1\leq\nu\leq\lambda$, as in \eqref{4.5}. Then the generating function $F(Z,q)$,
defined by \eqref{4.11}, satisfies the functional equation
\[
F(Z,q)=\big(1+Y_1 q+\cdots+Y_\lambda q^\lambda\big)F(Z^T,q^b).
\]
\end{corollary}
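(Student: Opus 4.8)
The plan is to prove Corollary~\ref{cor:4.5} by extracting exactly the chain of equalities that was already carried out in the first half of the proof of Theorem~\ref{thm:4.3}, stopping one step earlier — namely at the identity \eqref{4.8} — rather than pushing on to the coefficient-comparison step \eqref{4.9}--\eqref{4.10}. So this is essentially a matter of recording a by-product, and the main work is organizational rather than mathematical.

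First I would start from the defining generating function \eqref{2.4} for $F(Z,q)=\sum_{m\ge 0}\Omega_{b,T}^\Lambda(m;Z)q^m$, which is the double product $\prod_{j\ge 0}\prod_{\ell=1}^\rho\bigl(1+z_{\ell,1}^{t_{\ell,1}^j}q^{b^j}+\cdots+z_{\ell,\lambda_\ell}^{t_{\ell,\lambda_\ell}^j}q^{\lambda_\ell\cdot b^j}\bigr)$. Then I would split off the $j=0$ factor over all $\ell$: this contributes $\prod_{\ell=1}^\rho\bigl(1+z_{\ell,1}q+\cdots+z_{\ell,\lambda_\ell}q^{\lambda_\ell}\bigr)$, which by the definition \eqref{4.5} of the $Y_\nu$ (collecting terms of equal total $q$-degree $\nu=i_1+\cdots+i_\rho$) is exactly $1+Y_1q+\cdots+Y_\lambda q^\lambda$. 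The remaining factors are those with $j\ge 1$; re-indexing $j\mapsto j+1$ and using $z^{t^{\,j+1}}=(z^{t})^{t^{\,j}}$ together with $q^{b^{\,j+1}}=(q^b)^{b^{\,j}}$ turns the tail product into $\prod_{j\ge 0}\prod_{\ell=1}^\rho\bigl(1+(z_{\ell,1}^{t_{\ell,1}})^{t_{\ell,1}^j}(q^b)^{b^j}+\cdots\bigr)$, which is precisely the right-hand side of \eqref{2.4} evaluated at $(Z^T,q^b)$, i.e.\ it equals $F(Z^T,q^b)$. Multiplying the two pieces back together gives the functional equation $F(Z,q)=(1+Y_1q+\cdots+Y_\lambda q^\lambda)F(Z^T,q^b)$, which is the assertion.

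I would then remark that all of this is literally the content of the displayed computation culminating in \eqref{4.8} in the proof of Theorem~\ref{thm:4.3}, so the corollary follows immediately; one may simply cite that computation rather than reproduce it. The only point requiring a word of care is the identification of $\prod_{\ell}\bigl(1+z_{\ell,1}q+\cdots+z_{\ell,\lambda_\ell}q^{\lambda_\ell}\bigr)$ with $1+\sum_{\nu\ge 1}Y_\nu q^\nu$: expanding the product, the coefficient of $q^\nu$ is a sum of monomials $z_{1,i_1}\cdots z_{\rho,i_\rho}$ over all choices with $0\le i_\ell\le\lambda_\ell$ and $i_1+\cdots+i_\rho=\nu$ (with the convention $z_{\ell,0}=1$ accounting for the ``$1+$'' in the $\ell$th factor), and this is exactly \eqref{4.5}. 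Since the largest achievable $\nu$ is $\lambda=\lambda_1+\cdots+\lambda_\rho$, the polynomial has degree $\lambda$ and $Y_\mu=0$ for $\mu>\lambda$, consistent with the conventions used in the statement.

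The hard part, such as it is, is nothing more than bookkeeping with the multi-index notation of Definition~\ref{def:2.4} — in particular keeping the double subscripts $(\ell,i)$ and the parameter/variable pairs $(t_{\ell,i},z_{\ell,i})$ straight through the re-indexing $j\mapsto j+1$, and making sure the substitution $Z\mapsto Z^T$ is applied consistently coordinate by coordinate. There is no genuine analytic obstacle: everything takes place in the ring of formal power series $\mathbb{Z}[Z][[q]]$, the infinite products converge formally because the minimal $q$-degree of the $j$th factor block tends to infinity, and the rearrangement of the product is legitimate on that level. Hence the proof is short and I would present it in two or three sentences, mostly by pointing back to \eqref{4.8}.
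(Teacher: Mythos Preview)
Your proposal is correct and matches the paper's own treatment: the corollary is stated immediately after the proof of Theorem~\ref{thm:4.3} as a direct restatement of the identity \eqref{4.8}, with no separate proof given. Your write-up simply reproduces that computation and correctly identifies the one bookkeeping point (the expansion yielding the $Y_\nu$), so there is nothing to add.
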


\section{Proof of Theorem~3.2}

In this section we use the recurrence relation, Theorem~\ref{thm:4.3}, to prove
our representation result, namely Theorem~\ref{thm:3.2}. In doing so, we follow
the outline of the proof of Theorem~4 in \cite{DE11}, although the details will
be different. In order to simplify terminology, if there is no danger of 
ambiguity we will refer to a $\Lambda$-restricted $\rho$-colored $b$-ary 
partition, as defined in Definition~\ref{def:2.1}, simply as ``a partition 
of $n$".

We fix the integers $b\geq 2$ and $\rho\geq 1$, as well as the $\rho$-tuple
$\Lambda$, integer $\lambda$, and the $\lambda$-tuples $T$ and $Z$ as in
Definition~\ref{def:2.4}. As we did in the proof of Theorem~\ref{thm:4.3}, we
suppress the subscripts and superscripts of $\Omega$. We proceed by induction
on $n$.

\medskip
{\bf 1.} For the induction beginning we consider \eqref{4.4} with $n=0$. Then
\begin{equation}\label{5.1}
\Omega(j;Z) = Y_j,\qquad j=0,1,\ldots,b-1,
\end{equation}
where by \eqref{4.5} we have
\begin{equation}\label{5.2}
Y_j = \sum_{\substack{i_1+\cdots+i_\rho=j\\0\leq i_\ell\leq\lambda_\ell,\,\ell=1,\ldots,\rho}}
z_{1,i_1}z_{2,i_2}\cdots z_{\rho,i_\rho}.
\end{equation}
On the other hand, the only partitions of $j$, with $0\leq j\leq b-1$, are
\begin{equation}\label{5.3}
i_1b_1^0+i_2b_2^0+\cdots+i_\rho b_\rho^0,\qquad 
0\leq i_\ell\leq\lambda_\ell,\;\ell=1,\ldots,\rho.
\end{equation}
This is consistent with \eqref{3.4}--\eqref{3.6}. Indeed, in view of 
\eqref{5.2}, the identity \eqref{3.4} becomes
\[
\Omega(j;Z)
= \sum_{h=1}^{C_b^\Lambda(n)}\prod_{\ell=1}^\rho
z_{\ell,i_\ell}^{p_{h,\ell,i_\ell}(t_{\ell,i_\ell})}.
\]
This means that, in view of \eqref{3.5}, for each $\ell=1,\ldots,\rho$ we have
\[
p_{h,\ell,i_\ell}(t_{\ell,i_\ell})=1,\quad \mu_{\ell,i_\ell}=1,\quad
\tau_{\ell,i_\ell}=0,
\]
and so by \eqref{3.6} the corresponding partition of $j$ is
\[
\sum_{\ell=1}^\rho i_\ell b_\ell^0,
\]
which is consistent with \eqref{5.3}. This completes the induction beginning.

\medskip
{\bf 2.} Next we assume that Theorem~\ref{thm:3.2} is true for all 
$m\leq nb-1$, for some $n\geq 1$. Our goal is to show that it is also true for 
$m=nb+j$, for all $j=0,1,\ldots,b-1$. To do so, we first fix an arbitrary $j$
with $0\leq j\leq b-1$. We then consider all partitions of $nb+j$, which can be
obtained recursively as follows. We fix an integer $k\geq 0$ and
\begin{enumerate}
\item[(a)] Take all partitions of $n-k$ and multiply each power of $b_\ell$
(including the 0th power) by $b_\ell$, $\ell=1,2,\ldots,\rho$, thus raising all
powers by 1.
\item[(b)] To each of the``raised" partitions from (a), add all partitions of
$kb+j$ of the form
\begin{equation}\label{5.4}
kb+j = i_1b_1^0+i_2b_2^0+\cdots+i_\rho b_\rho^0,\qquad
0\leq i_\ell\leq\lambda_\ell\quad\hbox{for}\quad\ell=1,\ldots,\rho.
\end{equation}
\item[(c)] Do (a) and (b) for all $k\geq 0$ that satisfy $kb+j\leq\lambda$.
\end{enumerate}

This procedure gives all partitions of $nb+j$ since $(n-k)\cdot b+(kb+j)=nb+j$.
Also, the maximal $k$ given by (c) is $\lfloor\lambda/b\rfloor$.

\medskip
{\bf 3.} Using the induction hypothesis and \eqref{3.4}, we have
\begin{equation}\label{5.5}
\Omega(n-k;Z)
= \sum_{h=1}^{C_b^\Lambda(n-k)}\prod_{\ell=1}^\rho
z_{\ell,1}^{p_{h,\ell,1}(t_{\ell,1})}\cdots
z_{\ell,\lambda_\ell}^{p_{h,\ell,\lambda_\ell}(t_{\ell,\lambda_\ell})},
\end{equation}
with exponents 
$p_{h,\ell,1}(t_{\ell,1}),\ldots,p_{h,\ell,\lambda_\ell}(t_{\ell,\lambda_\ell})$
as in \eqref{3.5}. In order to lift the partitions of $n-k$ to those of 
$(n-k)b$, which corresponds to step (a), all powers of $t_{\ell,i}$,
$1\leq\ell\leq\rho$ and $1\leq i\leq\lambda_\ell$, are augmented by 1. This,
in turn, means that \eqref{5.5} changes to $\Omega(n-k;Z^T)$.

Next, for each $k$ with $0\leq k\leq\lfloor\lambda/b\rfloor$, we multiply
$\Omega(n-k;Z^T)$ by the monomial
\[
z_{1,i_1}z_{2,i_2}\cdots z_{\rho,i\rho},\qquad 
0\leq i_{\ell}\leq\lambda_{\ell}\quad\hbox{for}\quad\ell=1,\ldots,\rho,
\]
which corresponds to an individual partition of $kb+j$ of the form \eqref{5.4}
in step (b). This means that for each $\ell=1,\ldots,\rho$, the exponents of
$z_{\ell,i_\ell}$ are increased by 1, which is again consistent with the
duality between monomials and individual partitions. Now, taking into account
all partitions of $kb+j$ of the form \eqref{5.4} means multiplying 
$\Omega(n-k;Z^T)$ by
\[
\sum_{\substack{i_1+\cdots+i_\rho=kb+j\\0\leq i_\ell\leq\lambda_\ell,\,\ell=1,\ldots,\rho}}
z_{1,i_1}z_{2,i_2}\cdots z_{\rho,i_\rho}.
\]
But, by \eqref{4.5}, this sum is exactly $Y_{kb+j}$.

\medskip
{\bf 4.} Recall that steps 2 and 3 were done for an arbitrary fixed integer
$k$ with $0\leq k\leq \lfloor\lambda/b\rfloor$. The polynomial corresponding to
all partitions of $nb+j$ is therefore
\[
\sum_{k=0}^{\lfloor\lambda/b\rfloor}Y_{bk+j}\Omega(n-k;Z^T),
\]
which, by Theorem~\ref{thm:4.3}, is $\Omega(bn+j;Z)$. Since $j$ was an 
arbitrarily chosen but fixed integer with $0\leq j\leq b-1$, this concludes
the proof of Theorem~\ref{thm:3.2} by induction.

\section{Explicit formulas}

In this section we derive explicit formulas for our polynomials
$\Omega_{b,T}^\Lambda(n;Z)$, thus generalizing Theorem~\ref{thm:1.6}. For
greater clarity, we begin with an intermediate result for the binary case, i.e.,
the case $b=2$. The function $d_t(n)$ is defined as in \eqref{1.9}, and the
``starred" multinomial coefficient is defined to be the least nonnegative
residue modulo 2 of the original multinomial coefficient.

\begin{theorem}[\cite{DE11}, Corollary~17]\label{thm:6.1}
Let $\lambda\geq 1$ be an integer, and $Z$ and $T$ as in 
Definition~\ref{def:2.3}. Then for $n\geq 0$ we have
\begin{equation}\label{6.1}
\omega_{2,T}^\lambda(n+1;Z) = \sum_{k_1+2k_2+\cdots+\lambda k_{\lambda}=n}
\binom{k_1+\cdots+k_\lambda}{k_1,\ldots,k_\lambda}^*
z_1^{d_{t_1}(k_1)}\cdots z_\lambda^{d_{t_\lambda}(k_\lambda)}.
\end{equation}
\end{theorem}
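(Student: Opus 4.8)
The plan is to prove \eqref{6.1} by expanding the infinite product in Definition~\ref{def:2.3} with $b=2$ and regrouping the resulting monomials; no induction is needed. A term in the expansion of $\prod_{a\ge 0}\bigl(1+z_1^{t_1^a}q^{2^a}+\cdots+z_\lambda^{t_\lambda^a}q^{\lambda\cdot 2^a}\bigr)$ is obtained by choosing, for each $a\ge 0$, an index $i_a\in\{0,1,\dots,\lambda\}$ (only finitely many of them nonzero), where $i_a=0$ means the summand $1$ is picked; such a choice contributes $q^{\sum_a i_a 2^a}\prod_{a:\,i_a\ge 1}z_{i_a}^{t_{i_a}^a}$. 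Comparing coefficients of $q^n$ gives
\[
\omega_{2,T}^\lambda(n+1;Z)=\sum\ \prod_{a:\,i_a\ge 1}z_{i_a}^{t_{i_a}^a},
\]
the sum being over all finitely supported sequences $(i_a)_{a\ge0}$ with $i_a\in\{0,\dots,\lambda\}$ and $\sum_a i_a 2^a=n$.

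Next I would regroup by color. For such a sequence $(i_a)$ and each $r\in\{1,\dots,\lambda\}$, put $S_r=\{a:i_a=r\}$; the sets $S_1,\dots,S_\lambda$ are finite and pairwise disjoint, and $(i_a)$ is recovered from them. If $k_r$ denotes the integer whose binary expansion has ones exactly in the positions belonging to $S_r$, then $S_r$ is the binary support of $k_r$, one has $\sum_{a\in S_r}t_r^a=d_{t_r}(k_r)$ by \eqref{1.9}, $\sum_a i_a 2^a=k_1+2k_2+\cdots+\lambda k_\lambda$, and $\prod_{a:\,i_a\ge1}z_{i_a}^{t_{i_a}^a}=z_1^{d_{t_1}(k_1)}\cdots z_\lambda^{d_{t_\lambda}(k_\lambda)}$. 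The assignment $(i_a)\mapsto(k_1,\dots,k_\lambda)$ is a bijection onto the set of all tuples $(k_1,\dots,k_\lambda)$ of nonnegative integers with pairwise disjoint binary supports satisfying $k_1+2k_2+\cdots+\lambda k_\lambda=n$. Hence $\omega_{2,T}^\lambda(n+1;Z)$ equals the sum of $z_1^{d_{t_1}(k_1)}\cdots z_\lambda^{d_{t_\lambda}(k_\lambda)}$ over exactly those tuples.

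It then remains to convert the disjointness condition into the starred multinomial coefficient. By Kummer's theorem for multinomial coefficients, the exponent of $2$ in $\binom{k_1+\cdots+k_\lambda}{k_1,\dots,k_\lambda}$ is the number of carries occurring in the base-$2$ addition of $k_1,\dots,k_\lambda$, and this number is $0$ precisely when every binary digit position contains a one in at most one of the $k_r$, that is, precisely when the binary supports of $k_1,\dots,k_\lambda$ are pairwise disjoint. Thus $\binom{k_1+\cdots+k_\lambda}{k_1,\dots,k_\lambda}^{*}$ equals $1$ on exactly the tuples occurring in the previous paragraph and $0$ on all others, so it may be inserted as a coefficient and the summation extended to all $(k_1,\dots,k_\lambda)$ with $k_1+2k_2+\cdots+\lambda k_\lambda=n$; this is precisely \eqref{6.1}.

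The one genuinely non-routine point is this last identification of ``no carries in base $2$'' with ``$\binom{k_1+\cdots+k_\lambda}{k_1,\dots,k_\lambda}$ odd'': it is classical, and can also be obtained by applying Lucas' theorem to each factor in $\binom{k_1+\cdots+k_\lambda}{k_1,\dots,k_\lambda}=\prod_{r=2}^{\lambda}\binom{k_1+\cdots+k_r}{k_r}$. As an alternative to the product expansion, one may instead induct on $n$ using the $\rho=1$, $b=2$ instance of the recurrence \eqref{4.1}: writing $k_r=2k_r'+\varepsilon_r$ with $\varepsilon_r\in\{0,1\}$, using the relation $d_{t_r}(2k_r'+\varepsilon_r)=\varepsilon_r+t_r\,d_{t_r}(k_r')$, and splitting the right-hand side of \eqref{6.1} according to which single $\varepsilon_r$ (if any) equals $1$ reproduces $\sum_{k}z_{2k+j}\,\omega_{2,T}^\lambda(n-k+1;Z^T)$; here the index bookkeeping is the main nuisance, which is why the direct expansion seems cleaner.
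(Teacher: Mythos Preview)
Your proof is correct and self-contained. Note, however, that the present paper does not itself prove Theorem~\ref{thm:6.1}: the result is simply quoted from \cite{DE11} (as Corollary~17 there), and the only argument the paper supplies is the remark following Theorem~\ref{thm:6.2} that the $b=2$ case of \eqref{6.5} collapses to \eqref{6.1} because $d_2^2(k_i)=k_i$ and $M_2=\mathbb{N}\cup\{0\}$. Your direct expansion of the infinite product in \eqref{2.3}, the bijection between choice sequences $(i_a)$ and tuples $(k_1,\dots,k_\lambda)$ with pairwise disjoint binary supports, and the Kummer/Lucas identification of ``disjoint binary supports'' with ``odd multinomial coefficient'' together give a clean independent proof; the alternative inductive route via \eqref{4.1} that you sketch at the end would also work.
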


We see that for $\lambda=2$ the identity \eqref{6.1} reduces to \eqref{1.11}.
On the other hand, Theorem~\ref{thm:6.1} was extended in \cite{DE11} to
arbitrary bases $b\geq 2$. Since this more general case will be needed to
obtain the main result of this section, we state it here, but without proof. 
We need some additional definitions with corresponding notations; see also
\cite{DE11}.

First we define the set
\begin{equation}\label{6.2}
M_b := \left\{\sum_{j\geq 0}c_jb^j\mid c_j\in\{0,1\}\right\},
\end{equation}
i.e., the set of all nonnegative integers whose $b$-ary digits are only 0 or 1.
Clearly we have $M_2={\mathbb N}\cup\{0\}$.

Next we extend \eqref{1.9} as follows. Let $k\in M_b$ with 
$k=\sum_{j\geq 0}c_jb^j$, $c_j\in\{0,1\}$. Then for an integer base $t\geq 1$ 
we define
\begin{equation}\label{6.3}
d_t^b(k):=\sum_{j\geq 0}c_jt^j.
\end{equation}
It is clear that $d_t^2(k)=d_t(k)$ and $d_b^b(k)=k$ for all integers $b\geq 2$
and $k\in M_b$.

Finally, we extend the multinomial coefficient modulo 2, as it is used in 
Theorem~\ref{thm:6.1}. If $k_1,\ldots,k_\lambda\in M_b$, we set
\begin{equation}\label{6.4}
\binom{k_1+\cdots+k_\lambda}{k_1,\ldots,k_\lambda}_b^*
:=\binom{d_2^b(k_1)+\cdots+d_2^b(k_\lambda)}{d_2^b(k_1),\ldots,d_2^b(k_\lambda)}^*,
\end{equation}
with the right-hand side of \eqref{6.4} as defined earlier. The desired 
generalization of Theorem~\ref{thm:6.1} is then as follows.

\begin{theorem}[\cite{DE11}, Theorem~21]\label{thm:6.2}
Let $b\geq 2$ and $\lambda\geq 1$ be integers, and $T$ and $Z$ as in 
Definition~\ref{def:2.3}. Then for $n\geq 0$ we have
\begin{equation}\label{6.5}
\omega_{b,T}^\lambda(n+1;Z) 
= \sum_{\substack{k_1+2k_2+\cdots+\lambda k_{\lambda}=n\\k_1,\ldots,k_\lambda\in M_b}}
\binom{k_1+\cdots+k_\lambda}{k_1,\ldots,k_\lambda}_b^*
z_1^{d_{t_1}^b(k_1)}\cdots z_\lambda^{d_{t_\lambda}^b(k_\lambda)}.
\end{equation}
\end{theorem}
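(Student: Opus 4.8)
The plan is to prove \eqref{6.5} directly from the defining generating function \eqref{2.3}, by expanding the infinite product and collecting the terms according to which ``color'' is attached to each power $b^j$. Multiplying out $\prod_{j\ge 0}\bigl(1+z_1^{t_1^j}q^{b^j}+\cdots+z_\lambda^{t_\lambda^j}q^{\lambda b^j}\bigr)$, a single monomial is obtained by selecting, for each $j\ge 0$, an index $a_j\in\{0,1,\ldots,\lambda\}$ --- with $a_j=0$ meaning the term $1$ is chosen and $z_0:=1$ --- where only finitely many $a_j$ are nonzero. The resulting monomial is $\prod_{j\ge 0}z_{a_j}^{t_{a_j}^{\,j}}q^{a_j b^j}$, so its $q$-exponent is $\sum_{j\ge 0}a_jb^j$. (The formal-power-series justification that the coefficient of each $q^n$ stabilizes after finitely many factors is routine.)

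First I would reorganize each such selection by color. For $r=1,\ldots,\lambda$ put $J_r=\{j\ge 0:a_j=r\}$ and $k_r:=\sum_{j\in J_r}b^j$. Then each $k_r$ lies in $M_b$, the sets $J_1,\ldots,J_\lambda$ are pairwise disjoint, and $\sum_{j}a_jb^j=\sum_{r=1}^\lambda r\,k_r$, so the $q$-exponent of the monomial is exactly $n=k_1+2k_2+\cdots+\lambda k_\lambda$. Moreover the exponent of $z_r$ in the monomial is $\sum_{j\in J_r}t_r^{\,j}=d_{t_r}^b(k_r)$ by \eqref{6.3}. Conversely, a tuple $(k_1,\ldots,k_\lambda)\in M_b^\lambda$ with $\sum_r r\,k_r=n$ arises in this way from \emph{exactly one} selection $(a_j)$ when the base-$b$ digit supports of $k_1,\ldots,k_\lambda$ are pairwise disjoint, and from \emph{no} selection otherwise (if some power $b^j$ occurred in two of the $k_r$ we would need $a_j$ to take two values at once). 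Thus the coefficient of $z_1^{d_{t_1}^b(k_1)}\cdots z_\lambda^{d_{t_\lambda}^b(k_\lambda)}q^n$ produced by the product is $1$ or $0$ according as these supports are disjoint or not.

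It then remains to identify this $0/1$ indicator with the starred multinomial coefficient $\binom{k_1+\cdots+k_\lambda}{k_1,\ldots,k_\lambda}_b^*$. By \eqref{6.4} this is the least nonnegative residue mod $2$ of $\binom{d_2^b(k_1)+\cdots+d_2^b(k_\lambda)}{d_2^b(k_1),\ldots,d_2^b(k_\lambda)}$; expressing this multinomial coefficient as a product of binomials $\prod_{i=2}^{\lambda}\binom{d_2^b(k_1)+\cdots+d_2^b(k_i)}{d_2^b(k_i)}$ and applying Kummer's theorem (equivalently Lucas' theorem) to each factor shows that it is odd precisely when no carries occur on adding $d_2^b(k_1),\ldots,d_2^b(k_\lambda)$ in base $2$, i.e.\ precisely when the binary supports of the $d_2^b(k_r)$ --- which coincide, position by position, with the base-$b$ digit supports of the $k_r$ --- are pairwise disjoint. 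Combining this with the previous paragraph and summing over all admissible tuples yields \eqref{6.5}.

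The main obstacle, and the point requiring the most care, is the bookkeeping in the second step: verifying that the map from a selection $(a_j)$ to the tuple $(k_1,\ldots,k_\lambda)$ is well defined, that its image is exactly the set of tuples in $M_b^\lambda$ with $\sum_r r\,k_r=n$ and pairwise disjoint digit supports, and that it is injective there, so that the product expansion neither over- nor under-counts monomials. Everything else --- the exponent computations and the Kummer/Lucas characterization of the odd multinomial coefficients --- is then routine. (Alternatively one could argue by induction on $n$ using the $\rho=1$ case of the recurrence \eqref{4.1}, tracking how multiplication by $z_{kb+j}$ and the substitution $Z\mapsto Z^T$ act on the $d_{t_r}^b$-exponents, but the direct expansion above is cleaner.)
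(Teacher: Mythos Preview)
The paper does not prove Theorem~\ref{thm:6.2}; it quotes the result from \cite{DE11} and explicitly writes ``we state it here, but without proof.'' So there is nothing in the present paper to compare your argument against.

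That said, your proof is correct. The direct expansion of the product in~\eqref{2.3}, the bijection between selections $(a_j)_{j\ge 0}$ and tuples $(k_1,\ldots,k_\lambda)\in M_b^\lambda$ with pairwise disjoint base-$b$ digit supports, and the identification of the disjoint-support indicator with the starred multinomial coefficient via Kummer/Lucas all go through cleanly. The observation that the binary support of $d_2^b(k_r)$ coincides position-by-position with the base-$b$ support of $k_r$ (since $k_r\in M_b$) is exactly what makes the definition~\eqref{6.4} do its job here.

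One small point of phrasing worth tightening: when you say ``the coefficient of $z_1^{d_{t_1}^b(k_1)}\cdots z_\lambda^{d_{t_\lambda}^b(k_\lambda)}q^n$ produced by the product is $1$ or $0$,'' what you are really counting is the number of \emph{selections $(a_j)$ that map to the tuple} $(k_1,\ldots,k_\lambda)$, not the coefficient of a monomial in the $z_r$. For particular integer values of the parameters $t_r$ (e.g.\ $t_r=1$), distinct tuples can produce the same $z$-monomial and the actual coefficient can exceed $1$. Your bijection matches the two sides of~\eqref{6.5} term by term over tuples rather than over distinct monomials, so the argument is unaffected; it is just worth making this explicit when you write it up.
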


We note that Theorem~\ref{thm:6.2} immediately implies 
Theorem~\ref{thm:6.1}. Indeed, when $b=2$, the multinomial coefficient defined
in \eqref{6.4} reduces to the one in \eqref{6.1} since $d^2_2(k_i) = k_i$. 
Then in the $b=2$ case we also have $d^b_{t_i}(k_i) = d_{t_i}(k_i)$, 
and the conditions concerning $M_b$ become irrelevant.

We are now ready to state and prove the main result of this section.

\begin{theorem}\label{thm:6.3}
Let $b\geq 2$ and $\rho\geq 1$ be integers, and let $\Lambda$, $T$, and $Z$ be
as in Definition~\ref{def:2.4}. Furthermore, let
\[
D_{\ell,i} := d^b_{t_{\ell,i}}(k_{\ell,i}),\qquad
1\leq i\leq\lambda_\ell \quad\hbox{and}\quad 1\leq\ell\leq\rho.
\]
Then for each integer $n\geq 0$ we have
\begin{equation}\label{6.6}
\Omega_{b,T}^\Lambda(n;Z) = \sum\prod_{\ell=1}^{\rho}
\binom{k_{\ell,1}+\cdots+k_{\ell,\lambda_\ell}}{k_{\ell,1},\ldots,k_{\ell,\lambda_\ell}}_b^*
z_{\ell,1}^{D_{\ell,1}}\cdots z_{\ell,\lambda_\ell}^{D_{\ell,\lambda_\ell}},
\end{equation}
where the sum is taken over all integers $k_{\ell,i}\in M_b$ with 
$1\leq i\leq\lambda_\ell$ and $1\leq\ell\leq\rho$, such that
\begin{equation}\label{6.7}
\sum_{\ell=1}^{\rho}\sum_{i=1}^{\lambda_\ell} i\cdot k_{\ell,i} = n.
\end{equation}
\end{theorem}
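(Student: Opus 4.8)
The plan is to derive \eqref{6.6} from Theorem~\ref{thm:6.2} by exploiting the factorization of $\Omega_{b,T}^\Lambda(n;Z)$ into a convolution of $\rho$ polynomials of the type $\omega_{b,T_\ell}^{\lambda_\ell}$, which is exactly the content of Theorem~\ref{thm:2.6}. First I would invoke \eqref{2.8}, which expresses $\Omega_{b,T}^\Lambda(n;Z)$ as a sum over all $(n_1,\ldots,n_\rho)$ with $n_1+\cdots+n_\rho=n$ of the product $\omega_{b,T_1}^{\lambda_1}(n_1+1;Z_1)\cdots\omega_{b,T_\rho}^{\lambda_\rho}(n_\rho+1;Z_\rho)$. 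Then, to each factor $\omega_{b,T_\ell}^{\lambda_\ell}(n_\ell+1;Z_\ell)$ I would apply Theorem~\ref{thm:6.2}, rewriting it as the sum over $k_{\ell,1},\ldots,k_{\ell,\lambda_\ell}\in M_b$ with $k_{\ell,1}+2k_{\ell,2}+\cdots+\lambda_\ell k_{\ell,\lambda_\ell}=n_\ell$ of the term
\[
\binom{k_{\ell,1}+\cdots+k_{\ell,\lambda_\ell}}{k_{\ell,1},\ldots,k_{\ell,\lambda_\ell}}_b^*
z_{\ell,1}^{d^b_{t_{\ell,1}}(k_{\ell,1})}\cdots z_{\ell,\lambda_\ell}^{d^b_{t_{\ell,\lambda_\ell}}(k_{\ell,\lambda_\ell})}.
\]

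Next I would substitute these expansions into \eqref{2.8} and reorganize the multiple sum. The product over $\ell$ of the inner sums becomes a single sum over all tuples $(k_{\ell,i})_{1\le\ell\le\rho,\,1\le i\le\lambda_\ell}$ with $k_{\ell,i}\in M_b$, where for each $\ell$ the local constraint $\sum_{i=1}^{\lambda_\ell} i\,k_{\ell,i}=n_\ell$ holds. The summand of this combined sum is precisely $\prod_{\ell=1}^\rho \binom{k_{\ell,1}+\cdots+k_{\ell,\lambda_\ell}}{k_{\ell,1},\ldots,k_{\ell,\lambda_\ell}}_b^* z_{\ell,1}^{D_{\ell,1}}\cdots z_{\ell,\lambda_\ell}^{D_{\ell,\lambda_\ell}}$, matching the right-hand side of \eqref{6.6}. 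The final bookkeeping step is to collapse the outer sum over $(n_1,\ldots,n_\rho)$: summing over all $(n_1,\ldots,n_\rho)$ with $n_1+\cdots+n_\rho=n$ and, for each such tuple, over all $(k_{\ell,i})$ satisfying the $\rho$ local constraints, is the same as summing directly over all $(k_{\ell,i})\in M_b$ subject to the single global constraint $\sum_{\ell=1}^\rho\sum_{i=1}^{\lambda_\ell} i\,k_{\ell,i}=n$, since the $n_\ell$ are then determined as $n_\ell=\sum_{i=1}^{\lambda_\ell} i\,k_{\ell,i}$ and automatically sum to $n$. This gives exactly the index set described by \eqref{6.7}.

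The main obstacle, such as it is, is this last reindexing: one must check carefully that the correspondence between ``choose $(n_1,\ldots,n_\rho)$ first, then refine each $n_\ell$ into $(k_{\ell,1},\ldots,k_{\ell,\lambda_\ell})$'' and ``choose all $(k_{\ell,i})$ at once with the global weighted sum equal to $n$'' is a genuine bijection of index sets, with no tuple counted twice and none omitted. This is routine but worth spelling out, because the membership conditions $k_{\ell,i}\in M_b$ and the weights $i$ interact with both the local and global constraints. Beyond that, the proof is purely formal manipulation of finite sums, with Theorems~\ref{thm:2.6} and~\ref{thm:6.2} doing all the substantive work; no new generating-function identity or combinatorial argument is needed. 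For clarity I would include a brief remark that when $\rho=1$ the identity \eqref{6.6} reduces to \eqref{6.5} via \eqref{2.6}, consistent with the corresponding reduction of Theorem~\ref{thm:2.6} noted after its proof.
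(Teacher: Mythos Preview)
Your proposal is correct and follows essentially the same approach as the paper: combine Theorem~\ref{thm:2.6} with Theorem~\ref{thm:6.2} applied to each factor, then merge the summation conditions from \eqref{2.8} and \eqref{6.5} into the single constraint \eqref{6.7}. The paper's proof is terser about the reindexing you spell out, but the substance is identical.
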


\begin{proof}
We combine Theorem~\ref{thm:2.6} with Theorem~\ref{thm:6.2}, where in 
\eqref{6.1} we replace $n$ by $n_\ell$, $\lambda$ by $\lambda_\ell$ and add
the additional subscript $\ell$ to $k_i$, $t_i$ and $z_i$, where 
$\ell=1,\ldots,\rho$. The right-hand side of \eqref{6.6} then follows 
immediately from the right-hand side of \eqref{2.8}, and the condition 
\eqref{6.7} follows from the summation conditions in \eqref{2.8} and in
\eqref{6.5}.
\end{proof}

We conclude this section with two examples. The first one is our ``running
example", continuing Examples~\ref{ex:2.5}, \ref{ex:3.1}, and \ref{ex:4.4}.

\begin{example}\label{ex:6.4}
{\rm Let $b=2$ and $\Lambda=(2,3)$, and to avoid double indices, we set again
$T=(s_1,s_2;t_1,t_2,t_3)$ and $Z=(y_1,y_2;z_1,z_2,z_3)$. We also set
$h_i:=k_{1,i}$, $i=1,2$, and $k_i:=k_{2,i}$, $i=1,2,3$. Then by 
Theorem~\ref{thm:6.3} we have
\begin{align}
\Omega_{2,T}^\Lambda(n;Z) 
=\sum_{\substack{h_1+2h_2+k_1\\+2k_2+3k_3=n}}
&\binom{h_1+h_2}{h_1,h_2}^*\binom{k_1+k_2+k_3}{k_1,k_2,k_3}^*\label{6.8} \\
&\times y_1^{d_{s_1}(h_1)}y_2^{d_{s_2}(h_2)}
z_1^{d_{t_1}(k_1)}z_2^{d_{t_2}(k_2)}z_3^{d_{t_3}(k_3)}.\nonumber
\end{align}
Since $b=2$, the condition in Theorem~\ref{thm:6.3} that concerns $M_b$ becomes
irrelevant; see the remark following Theorem~\ref{thm:6.2}.

As a specific example we consider $n=4$. By direct computation we find that the
solutions $(h_1,h_2;k_1,k_2,k_3)$ of $h_1+2h_2+k_1+2k_2+3k_3=4$ for which the
binomial/trinomial coefficients in \eqref{6.8} are odd (i.e., the ``starred"
versions are 1) are as follows:
\begin{align*}
&(0, 0; 4, 0, 0), (0, 0; 2, 1, 0), (0, 0; 0, 2, 0), (1, 0; 3, 0, 0), 
(1, 0; 0, 0, 1),\\
&(2, 0; 2, 0, 0), (2, 0; 0, 1, 0), (4, 0; 0, 0, 0), (3, 0; 1, 0, 0),
(0, 1; 2, 0, 0),\\
& (0, 1; 0, 1, 0), (2, 1; 0, 0, 0), (0, 2; 0, 0, 0),
\end{align*}
so that \eqref{6.8} becomes
\begin{align*}
\Omega_{2,T}^\Lambda(4;Z)
&= z_1^{d_{t_1}(4)}+z_1^{d_{t_1}(2)}z_2^{d_{t_2}(1)}+z_2^{d_{t_2}(2)}+
z_1^{d_{t_1}(3)}y_1^{d_{s_1}(1)}+z_3^{d_{t_3}(1)}y_1^{d_{s_1}(1)}\\
&\quad+z_1^{d_{t_1}(2)}y_1^{d_{s_1}(2)}+z_2^{d_{t_2}(1)}y_1^{d_{s_1}(2)}
+y_1^{d_{s_1}(4)}+z_1^{d_{t_1}(1)}y_1^{d_{s_1}(3)} \\
&\quad+z_1^{d_{t_1}(2)}y_2^{d_{s_2}(1)}+z_2^{d_{t_2}(1)} y_2^{d_{s_2}(1)}
+y_1^{d_{s_1}(2)}y_2^{d_{s_2}(1)} + y_2^{d_{s_2}(2)}\\
&=z_1^{t_1^2}+z_1^{t_1}z_2+z_2^{t_2}+z_1^{1+t_1}y_1
+z_3y_1+z_1^{t_1}y_1^{s_1}+z_2y_1^{s_1}+y_1^{s_1^2} \\
&\quad+z_1y_1^{1+s_1}+z_1^{t_1}y_2+z_2y_2+y_1^{s_1}y_2+y_2^{s_2}.
\end{align*}
This is consistent with the entry for $\Omega_{2,T}^{\Lambda}(4;Z)$ in Table~1.

It is interesting to note that the smallest $n$ for which all three entries
in the trinomial coefficient in \eqref{6.8} are nonzero is $n=11$, with 
$k_1=4$, $k_2=2$, and $k_3=1$, giving
\[
\binom{k_1+k_2+k_3}{k_1,k_2,k_3}=\binom{7}{4,2,1}=105\equiv 1\pmod{2}.
\]
This, in turn, leads to the following monomial and corresponding colored binary 
partition:
\[
z_{1}^{t_1^2} z_{2}^{t_2} z_3\;\longleftrightarrow \;
11 = 2_2^2 + 2_2 + 2_2 + 2_2^0 + 2_2^0 + 2_2^0.
\]
}
\end{example}

\begin{example}\label{ex:6.5}
{\rm We now choose $b=3$, but leave $\Lambda$, $T$, and $Z$ as in 
Example~\ref{ex:6.4}. Then in analogy to \eqref{6.8} we get
\begin{align}
\Omega_{3,T}^\Lambda(n;Z)
=\sum_{\substack{h_1+2h_2+k_1\\+2k_2+3k_3=n;\\h_1,\ldots,k_3\in M_3}}
&\binom{h_1+h_2}{h_1,h_2}_3^*\binom{k_1+k_2+k_3}{k_1,k_2,k_3}_3^*\label{6.9} \\
&\times y_1^{d_{s_1}^3(h_1)}y_2^{d_{s_2}^3(h_2)}
z_1^{d_{t_1}^3(k_1)}z_2^{d_{t_2}^3(k_2)}z_3^{d_{t_3}^3(k_3)}.\nonumber
\end{align}
This time we consider $n=6$ as a specific example. The conditions
$h_1,\ldots,k_3\in M_3$ mean, in particular, that 2 and 5 cannot occur among
the solutions of $h_1+2h_2+k_1+2k_2+3k_3=6$. By direct computation, using the
definition \eqref{6.4}, we find that both of the modified binomial/trinomial
coefficients in \eqref{6.9} are 1 when $(h_1,h_2;k_1,k_2,k_3)$ is of the form
\begin{align*}
& (0,3;0,0,0), (3,1;1,0,0), (3,0;3,0,0), (0,1;4,0,0),\\
& (4,0;0,1,0), (1,0;3,1,0), (0,0;0,3,0), (3,0;0,0,1), (0,0;3,0,1),
\end{align*}
so that \eqref{6.9} becomes
\begin{align*}
\Omega_{3,T}^\Lambda(6;Z) 
&= y_2^{d^3_{s_2}(3)}+y_1^{d^3_{s_1}(3)} y_2^{d^3_{s_2}(1)} z_1^{d^3_{t_1}(1)} 
+ y_1^{d^3_{s_1}(3)} z_1^{d^3_{t_1}(3)}+y_2^{d^3_{s_2}(1)}z_1^{d^3_{t_1}(4)}\\
&\quad +y_1^{d^3_{s_1}(4)} z_2^{d^3_{t_2}(1)} 
+ y_1^{d^3_{s_1}(1)}z_1^{d^3_{t_1}(3)}
z_2^{d^3_{t_2}(1)} + z_2^{d^3_{t_2}(3)}+ y_1^{d^3_{s_1}(3)}z_3^{d^3_{t_3}(1)}\\
&\quad+ z_1^{d^3_{t_1}(3)} z_3^{d^3_{t_3}(1)}  \\
& = y_2^{s_2}+y_1^{s_1} y_2 z_1+ y_1^{s_1} z_1^{t_1}+ y_2 z_1^{1+t_1}+
y_1^{1+s_1} z_2+ y_1 z_1^{t_1} z_2+ z_2^{t_2}\\
&\quad + y_1^{s_1}z_3+ z_1^{t_1} z_3.
\end{align*}
Finally we illustrate in Table~3 how the polynomial $\Omega_{3,T}^\Lambda(6;Z)$
translates to the set of nine $(2,3)$-restricted 2-colored ternary partitions
of $n=6$.
}
\end{example}

\begin{center}
\begin{table}[h]
{\renewcommand{\arraystretch}{1.3}
\begin{tabular}{|c|c||c|c|}
\hline
{\rm Monomial} & {\rm Partition} & {\rm Monomial} & {\rm Partition} \\
\hline
$y_{2}^{s_2}$ & $3_1+3_1$ & $y_{1} z_{1}^{t_{1}} z_{2}$ & $1_1+3_2+1_2+1_2$\\
$y_{1}^{s_1}y_{2} z_{1}$ & $3_1+1_1+1_1+1_2$ & $z_{2}^{t_2}$ & $3_2+3_2$ \\
$y_{1}^{s_1} z_{1}^{t_{1}}$ & $3_1+3_2$ & $y_{1}^{s_1}z_{3}$ & $3_1+1_2+1_2+1_2$   \\
$y_{2} z_{1}^{1+t_{1}}$ & $1_1+1_1+1_2+3_2$ & $z_{1}^{t_{1}}z_{3}$ & $3_2+1_2+1_2+1_2$   \\
$y_{1}^{1+s_1} z_{2}$  &  $1_1+3_1+1_2+1_2$ & & \\
\hline
\end{tabular}}
\medskip
\caption{The terms of $\Omega_{3,T}^\Lambda(6;Z)$ and corresponding partitions.}
\end{table}
\end{center}
\vspace{-9ex}

\section{Some product identities}

In dealing with polynomials in one or more variables with integer coefficients,
questions of divisibility and irreducibility become relevant. For instance, in
the case of multivariate $b$-ary Stern polynomials \cite{DE10}, we obtain an
analogue of Theorem~\ref{thm:1.7}. We present a special case of this result,
rewritten in the notation of the current paper. It corresponds to $\rho=1$ and
$\Lambda=(\lambda)=(b)$.

\begin{theorem}[\cite{DE10}, Corollary~4.3]\label{thm:7.1}
Let $b\geq 2$, $\ell\geq 1$, and $n\geq 1$ be integers. Then
\begin{equation}\label{7.1}
\Omega_{b,T}^{(b)}(n\cdot b^\ell+j;Z)
=\Omega_{b,T}^{(b)}(n;Z^{T^\ell})\cdot\Omega_{b,T}^{(b)}(j;Z),\qquad
\frac{b^\ell-1}{b-1}\leq j\leq b^\ell-1.
\end{equation}
\end{theorem}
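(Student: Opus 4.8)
The plan is to derive \eqref{7.1} from the recurrence relation in Theorem~\ref{thm:4.3}, specialized to the case $\rho=1$, $\Lambda=(b)$, so that $\lambda=b$ and the coefficients $Y_\nu$ reduce simply to $z_\nu$ (with $z_0=1$). In this case $\lfloor\lambda/b\rfloor=1$, so the recurrence \eqref{4.4} becomes the two-term relation
\[
\Omega_{b,T}^{(b)}(bm+j;Z)=z_j\,\Omega_{b,T}^{(b)}(m;Z^T)+z_{b+j}\,\Omega_{b,T}^{(b)}(m-1;Z^T),
\]
where $z_{b+j}=0$ unless $j=0$, in which case $z_b$ is the single variable carrying the top power. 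First I would establish the base case $\ell=1$: here the claimed range is $1\le j\le b-1$, and the recurrence gives $\Omega_{b,T}^{(b)}(bn+j;Z)=z_j\,\Omega_{b,T}^{(b)}(n;Z^T)$ since $z_{b+j}=0$ for $j\ge 1$. On the other hand, again by the recurrence with $m=0$, we have $\Omega_{b,T}^{(b)}(j;Z)=z_j$ for $0\le j\le b-1$, so the base case reads $\Omega_{b,T}^{(b)}(bn+j;Z)=\Omega_{b,T}^{(b)}(n;Z^T)\cdot\Omega_{b,T}^{(b)}(j;Z)$, which is \eqref{7.1} with $\ell=1$ since $Z^{T^1}=Z^T$.

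For the inductive step, assume \eqref{7.1} holds for some $\ell\ge 1$ and all admissible $j$, and consider $n\cdot b^{\ell+1}+j'$ with $\tfrac{b^{\ell+1}-1}{b-1}\le j'\le b^{\ell+1}-1$. I would write $j'=b\cdot m+r$ with $0\le r\le b-1$ via the division algorithm; one checks that the hypothesis on $j'$ forces $\tfrac{b^{\ell}-1}{b-1}\le m\le b^{\ell}-1$, i.e. $m$ lies in the range governed by the induction hypothesis at level $\ell$ (the key inequalities being $\tfrac{b^{\ell+1}-1}{b-1}=b\cdot\tfrac{b^\ell-1}{b-1}+1$ and $b^{\ell+1}-1=b(b^\ell-1)+(b-1)$). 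Now apply the two-term recurrence to $n\cdot b^{\ell+1}+j'=b\cdot(n b^\ell+m)+r$:
\[
\Omega_{b,T}^{(b)}(nb^{\ell+1}+j';Z)=z_r\,\Omega_{b,T}^{(b)}(nb^\ell+m;Z^T)+z_{b+r}\,\Omega_{b,T}^{(b)}(nb^\ell+m-1;Z^T).
\]
Apply the induction hypothesis to each term on the right (with $Z$ replaced by $Z^T$, so that $Z^{T^\ell}$ becomes $Z^{T^{\ell+1}}$), factoring out $\Omega_{b,T}^{(b)}(n;Z^{T^{\ell+1}})$; this is legitimate provided both $m$ and $m-1$ stay within the admissible range, which needs the separate easy check that when $r<b-1$ the term $z_{b+r}$ vanishes, and when $r=b-1$ the index $m$ can be taken with $m$ itself and the relation $b\cdot(nb^\ell+m)+(b-1)=b\cdot(nb^\ell+(m-1))+(b-1)+b$ rearranged appropriately. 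After factoring, the remaining bracket is $z_r\,\Omega_{b,T}^{(b)}(m;Z^T)+z_{b+r}\,\Omega_{b,T}^{(b)}(m-1;Z^T)$, which by the recurrence applied in reverse equals $\Omega_{b,T}^{(b)}(bm+r;Z)=\Omega_{b,T}^{(b)}(j';Z)$, completing the induction.

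The main obstacle I anticipate is the careful bookkeeping of the index ranges, in particular verifying that the division-algorithm decomposition $j'=bm+r$ always lands $m$ (and, when needed, $m-1$) inside the range $[\tfrac{b^\ell-1}{b-1},\,b^\ell-1]$ where the inductive hypothesis applies, and handling the boundary case $j'=\tfrac{b^{\ell+1}-1}{b-1}$ (which forces $r=1$, $m=\tfrac{b^\ell-1}{b-1}$) versus the generic case. This is exactly the place where the lower bound $\tfrac{b^\ell-1}{b-1}$ on $j$ — rather than simply $j\ge 1$ — is essential, and it is the reason the factorization is meaningful only for $b\ge 3$: when $b=2$ the range $\tfrac{2^\ell-1}{1}\le j\le 2^\ell-1$ collapses to the single value $j=2^\ell-1$, so there is nothing to prove beyond a trivial identity. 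Everything else is a routine substitution once the recurrence of Theorem~\ref{thm:4.3} is specialized and the arithmetic of the ranges is pinned down.
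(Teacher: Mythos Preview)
Your overall plan --- induct on $\ell$ using the recurrence of Theorem~\ref{thm:4.3} specialized to $\rho=1$, $\Lambda=(b)$ --- is exactly the route the paper takes in proving the more general Theorem~\ref{thm:7.2}, of which Theorem~\ref{thm:7.1} is the case $\lambda=b$. The base case and the shape of the inductive step are correct.

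There is, however, a genuine slip in your case analysis. Early on you correctly note that $z_{b+j}=0$ unless $j=0$, but in the inductive step you write that ``when $r<b-1$ the term $z_{b+r}$ vanishes, and when $r=b-1$\ldots''. This is backwards: $z_{b+r}=0$ for every $r\geq 1$ (since then $b+r>\lambda=b$), and it is precisely $r=0$ that yields the nonvanishing second term $z_b$. The case $r=b-1$ is harmless --- a one-term recurrence like every other $r\geq 1$ --- and the algebraic rearrangement you sketch for it accomplishes nothing.

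The case that actually needs care is $r=0$, i.e., $j'=bm$ divisible by $b$. Here both $\Omega(nb^\ell+m;Z^T)$ and $\Omega(nb^\ell+(m-1);Z^T)$ appear, so you must check that $m-1$, not just $m$, lies in the range $\bigl[\tfrac{b^\ell-1}{b-1},\,b^\ell-1\bigr]$ where the induction hypothesis applies. This follows from your own key identity $\tfrac{b^{\ell+1}-1}{b-1}=b\cdot\tfrac{b^\ell-1}{b-1}+1$: if $j'=bm\geq b\cdot\tfrac{b^\ell-1}{b-1}+1$, then $m>\tfrac{b^\ell-1}{b-1}$, hence $m-1\geq\tfrac{b^\ell-1}{b-1}$. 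With that in hand, factor $\Omega(n;Z^{T^{\ell+1}})$ from both terms and recombine via the recurrence to obtain $\Omega(bm;Z)=\Omega(j';Z)$, exactly as you outlined for the generic case. Once the $r=0$ case is handled this way your argument is complete. By contrast, the paper's proof of Theorem~\ref{thm:7.2} restricts the remainder to $i\in[\lambda-b+1,b-1]=[1,b-1]$ so that the recurrence is always one-term, which avoids the two-term analysis but does not directly address $j$ divisible by $b$; your division-algorithm approach, once corrected, handles all residues uniformly.
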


In this section we show that Theorem~\ref{thm:7.1} can be extended to an 
arbitrary number of colors $\rho$ and an arbitrary 
$\Lambda=(\lambda_1,\ldots,\lambda_\rho)$. We use again the notations of 
Definition~\ref{def:2.4}.

\begin{theorem}\label{thm:7.2}
Let $b\geq 2$, $\ell\geq 1$, and $n\geq 1$ be integers. Then
\begin{equation}\label{7.2}
\Omega_{b,T}^\Lambda(n\cdot b^\ell+j;Z)
=\Omega_{b,T}^\Lambda(n;Z^{T^\ell})\cdot\Omega_{b,T}^\Lambda(j;Z),
\end{equation}
which in the case $b\leq\lambda=\lambda_1+\cdots+\lambda_\rho$ holds for all
integers $j$ with
\begin{equation}\label{7.3}
(\lambda-b+1)\frac{b^\ell-1}{b-1}\leq j\leq b^\ell-1.
\end{equation}
When $b>\lambda$, then \eqref{7.2} holds for all $j$ with 
$0\leq j\leq b^\ell-1$.
\end{theorem}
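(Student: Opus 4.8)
The plan is to work directly with the generating function identity \eqref{2.4}, or equivalently with the functional equation of Corollary~\ref{cor:4.5}, and to argue that for $j$ in the stated range the ``carry interactions'' between the unit-digit block and the higher-digit blocks disappear, so that the product factors. The starting point is to iterate the recurrence \eqref{4.4} (equivalently, iterate the functional equation $F(Z,q)=(1+Y_1q+\cdots+Y_\lambda q^\lambda)F(Z^T,q^b)$ exactly $\ell$ times). Iterating $\ell$ times gives
\[
\Omega_{b,T}^\Lambda(m;Z)=\sum_{k}\Big(\prod\text{-type coefficient in }Z,Z^T,\ldots,Z^{T^{\ell-1}}\Big)\,\Omega_{b,T}^\Lambda\!\big(k;Z^{T^\ell}\big),
\]
where $k$ ranges over the ``high part'' $\lfloor m/b^\ell\rfloor$-neighbourhood and the coefficient is indexed by the base-$b$ digits of $m$ in positions $0,\ldots,\ell-1$ together with the carries they generate. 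Writing $m=nb^\ell+j$, the high index $k$ equals $n$ minus a nonnegative carry $c$, and the key point is to show that when $j$ satisfies \eqref{7.3} (resp.\ $0\le j\le b^\ell-1$ when $b>\lambda$), the only surviving term has carry $c=0$, so the sum collapses to $\Omega_{b,T}^\Lambda(n;Z^{T^\ell})$ times the coefficient indexed by the digits of $j$, and that latter coefficient is exactly $\Omega_{b,T}^\Lambda(j;Z)$ obtained from the same $\ell$-fold iteration applied to $j$ itself (for which $n=0$).

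More concretely, I would proceed as follows. First, I would set up the $\ell$-fold iteration cleanly as a lemma: from Corollary~\ref{cor:4.5},
\[
F(Z,q)=\Big(\prod_{r=0}^{\ell-1}P_r(q)\Big)F\big(Z^{T^\ell},q^{b^\ell}\big),\qquad
P_r(q)=1+Y_1^{(r)}q^{b^r}+\cdots+Y_\lambda^{(r)}q^{\lambda b^r},
\]
where $Y_\nu^{(r)}$ is $Y_\nu$ with each $z_{\ell',i}$ replaced by $z_{\ell',i}^{t_{\ell',i}^r}$. Expanding $\prod_{r=0}^{\ell-1}P_r(q)=\sum_{e\ge 0}A_e q^e$, the polynomial $A_e$ is supported on exponents $e=\sum_{r=0}^{\ell-1}\nu_r b^r$ with $0\le\nu_r\le\lambda$, i.e.\ on $0\le e\le\lambda\frac{b^\ell-1}{b-1}$. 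Equating coefficients of $q^m$ with $m=nb^\ell+j$ gives
\[
\Omega_{b,T}^\Lambda(nb^\ell+j;Z)=\sum_{e}A_e\,\Omega_{b,T}^\Lambda\!\Big(n-\tfrac{e-j}{b^\ell}\cdot\big[\text{$b^\ell\mid e-j$}\big];Z^{T^\ell}\Big),
\]
so only those $e\equiv j\pmod{b^\ell}$ contribute, say $e=j+cb^\ell$ with $c\ge 0$, contributing $A_{j+cb^\ell}\,\Omega_{b,T}^\Lambda(n-c;Z^{T^\ell})$. The whole proof then reduces to the combinatorial claim: \emph{under \eqref{7.3} the only $e$ in the support of $\prod_{r=0}^{\ell-1}P_r$ with $e\equiv j\pmod{b^\ell}$ is $e=j$ itself}, and moreover $A_j=\Omega_{b,T}^\Lambda(j;Z)$. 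The second half is immediate: taking $n=0$ in the displayed identity gives $\Omega_{b,T}^\Lambda(j;Z)=A_j\cdot\Omega_{b,T}^\Lambda(0;Z^{T^\ell})=A_j$ since $\Omega_{b,T}^\Lambda(0;\cdot)=1$ and no other $e\le\lambda\frac{b^\ell-1}{b-1}<b^\ell$ (in the range $b>\lambda$) or no $e\equiv j$ with $e\ne j$ (in the range \eqref{7.3}) can contribute — which is the same claim.

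So the heart of the argument, and the step I expect to be the main obstacle, is the support/range estimate: showing that if $j$ is large enough, namely $j\ge(\lambda-b+1)\frac{b^\ell-1}{b-1}$, then one cannot write $j+cb^\ell=\sum_{r=0}^{\ell-1}\nu_rb^r$ with $0\le\nu_r\le\lambda$ and $c\ge 1$. The clean way is to observe that the maximum of $\sum_{r=0}^{\ell-1}\nu_rb^r$ over $0\le\nu_r\le\lambda$ is $\lambda\frac{b^\ell-1}{b-1}$, while $j+b^\ell\ge(\lambda-b+1)\frac{b^\ell-1}{b-1}+b^\ell$; a direct computation shows $(\lambda-b+1)\frac{b^\ell-1}{b-1}+b^\ell>\lambda\frac{b^\ell-1}{b-1}$ (equivalently $b^\ell>(b-1)\frac{b^\ell-1}{b-1}=b^\ell-1$, which holds), so $c\ge1$ is impossible and $e=j$ is forced. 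The case $b>\lambda$ is even simpler: then $\lambda\frac{b^\ell-1}{b-1}<b^\ell$, so the entire support of $\prod_{r=0}^{\ell-1}P_r$ lies in $[0,b^\ell-1]$, whence for every $j$ in that interval the only admissible $e$ is $j$ and no carry is possible, giving \eqref{7.2} for all $0\le j\le b^\ell-1$. Finally I would note that the lower endpoint in \eqref{7.3} is sharp — for $j$ just below it a carry term $A_{j+b^\ell}\Omega_{b,T}^\Lambda(n-1;Z^{T^\ell})$ genuinely appears — which explains why the factorization fails outside the stated range and recovers Theorem~\ref{thm:7.1} when $\Lambda=(b)$, i.e.\ $\lambda=b$, where $(\lambda-b+1)\frac{b^\ell-1}{b-1}=\frac{b^\ell-1}{b-1}$.
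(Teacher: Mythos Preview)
Your argument is correct and complete, but it takes a different route from the paper's proof. The paper proceeds by induction on $\ell$: the base case $\ell=1$ follows directly from the one-step recurrence \eqref{4.4}, since for $\lambda-b+1\le j\le b-1$ all terms with $k\ge1$ in \eqref{4.4} vanish (because $bk+j>\lambda$), leaving $\Omega(nb+j;Z)=Y_j\,\Omega(n;Z^T)$; the inductive step writes $j=rb+i$ with $i$ in the base-case range and $r$ in the level-$\ell$ range, and combines the base case with the induction hypothesis.

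Your approach instead iterates Corollary~\ref{cor:4.5} $\ell$ times in one stroke and reduces everything to a single support estimate: the product $\prod_{r=0}^{\ell-1}P_r$ has exponents bounded by $\lambda\frac{b^\ell-1}{b-1}$, and the lower bound in \eqref{7.3} is exactly what forces $j+b^\ell$ to exceed this maximum, killing all carry terms $c\ge1$ at once. This makes the origin of the range \eqref{7.3} completely transparent and also sidesteps a small bookkeeping point the inductive proof leaves implicit (that every $j$ in the level-$(\ell+1)$ range admits a decomposition $j=rb+i$ with $r,i$ in the required subranges). One minor remark: your identification $A_j=\Omega(j;Z)$ actually holds for \emph{every} $0\le j\le b^\ell-1$, simply because in the convolution $\sum_{e+kb^\ell=j}A_e\,\Omega(k;Z^{T^\ell})$ with $k\ge0$ one is forced to $k=0$; you do not need to invoke the range hypothesis there.
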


\begin{proof}
We proceed by induction on $\ell\geq 1$, and as usual we suppress the subscripts
and superscripts of $\Omega$. When $\ell=1$ and $\lambda-b+1\leq j\leq b-1$,
then the recurrence relation \eqref{4.4} gives
\begin{equation}\label{7.4}
\Omega(n\cdot b+j;Z) = Y_j\cdot\Omega(n;Z^T),
\end{equation}
since $b+j\geq\lambda+1$, so all other coefficients $Y_{bk+j}$ in \eqref{4.4}
vanish. For $n=0$, the identity \eqref{7.4} gives $\Omega(j;Z)=Y_j$, and we get
\begin{equation}\label{7.5}
\Omega(n\cdot b+j;Z) = \Omega(n;Z^T)\cdot \Omega(j;Z).
\end{equation}
In the case $\lambda<b$, \eqref{7.5} holds for all $0\leq j\leq b-1$. This 
establishes the induction beginning.

Now suppose that Theorem~\ref{thm:7.2} holds for some $\ell\geq 1$. We wish to
show that it also holds for $\ell+1$. To do so, we begin by setting
\begin{equation}\label{7.6}
j=r\cdot b+i,\qquad \lambda-b+1\leq i\leq b-1,\qquad
(\lambda-b+1)\frac{b^\ell-1}{b-1}\leq r\leq b^\ell-1.
\end{equation}
This means that $j$ satisfies the inequalities
\begin{align*}
j &\leq (b^\ell-1)b+(b-1) = b^{\ell+1}-1,\\
j &\geq (\lambda-b+1)\frac{b^\ell-1}{b-1}b+(\lambda-b+1)
= (\lambda-b+1)\frac{b^{\ell+1}-1}{b-1},
\end{align*}
as required. Now we have
\begin{align*}
\Omega(n\cdot b^{\ell+1}+j;Z) &= \Omega((n\cdot b^\ell+r)b+i;Z)\\
&= Y_i\cdot \Omega(n\cdot b^\ell+r;Z^T)\qquad\hbox{by\;}\eqref{7.4}\\
&= Y_i\cdot \Omega(n;Z^{T^{\ell+1}})\cdot\Omega(r;Z^T),
\end{align*}
where we have used the induction hypothesis \eqref{7.2}. Reordering this last
line, we have 
\[
\Omega(n\cdot b^{\ell+1}+j;Z) = \Omega(n;Z^{T^{\ell+1}})\cdot
Y_i\cdot\Omega(r;Z^T) 
= \Omega(n;Z^{T^{\ell+1}})\cdot\Omega(j;Z),
\]
where we have used \eqref{7.4} again. Note that in several steps the 
inequalities in \eqref{7.6} were essential. This completes the proof by 
induction.
\end{proof}

\begin{example}\label{ex:7.3}
{\rm We choose $b=4$, $\Lambda=(2,3)$, and leave $T$ and $Z$ as in 
Example~\ref{ex:6.4}. Using the recurrence relation in Theorem~\ref{thm:4.3}
we can compute $\Omega_{4,T}^{\Lambda}(n;Z)$ for $0\leq n\leq 11$, and factor
them when possible; see Table~4.

We note that the entries in Table~4 are consistent with Theorem~\ref{thm:7.2}.
The entries for $n=5, 8$, and 9 are irreducible and consist of 5, 7, and 8
monomials, respectively.}
\end{example}

\begin{center}
\begin{table}[h]
{\renewcommand{\arraystretch}{1.5}
\begin{tabular}{|l|l||l|l|}
\hline
$n$ & $\Omega_{4,T}^{\Lambda}(n;Z)$ & $n$ & $\Omega_{4,T}^{\Lambda}(n;Z)$ \\
\hline\hline
$0$ & $1$ & $6$ &  $(y_1^{s_1}+z_1^{t_1})(y_2+y_1 z_1+z_2)$  \\
\hline
$1$ & $y_1+z_1$ & $7$ & $(y_1^{s_1}+z_1^{t_1})(y_2z_1+y_1z_2+z_3)$ \\
\hline
$2$ & $y_2+y_1 z_1+z_2$ & $8$ & $y_2^{s_2}+\cdots+y_1z_1^{t_1}z_3$ \\
\hline
$3$ & $y_2 z_1+y_1 z_2+z_3$ & $9$ & $y_1y_2^{s_2}+\cdots+y_2z_1^{t_1}z_3$  \\
\hline
$4$ & $y_1^{s_1}+z_1^{t_1}+y_2 z_2+y_1 z_3$ 
& $10$ & $(y_2^{s_2}+y_1^{s_1}z_1^{t_1}+z_2^{t_2})(y_2+y_1 z_1+z_2)$    \\
\hline
$5$ & $y_1^{1+s_1}+\cdots +y_2 z_3$ 
& $11$ & $(y_2^{s_2}+y_1^{s_1}z_1^{t_1}+z_2^{t_2})(y_2 z_1+y_1 z_2+z_3)$ \\
\hline
\end{tabular}}
\medskip
\caption{$\Omega_{4,T}^{\Lambda}(n;Z)$ with $\Lambda=(2,3)$, for
$0\le n \le 11$.}
\label{tab:4}
\end{table}
\end{center}
\vspace{-4ex}

%

As far as Theorem~\ref{thm:1.7} is concerned, we believe that it extends in 
this form to general $\rho\geq 1$ and $\Lambda=(\lambda_1,\ldots,\lambda_\rho)$
only when $1\leq(\lambda+1)/b<2$, that is, when 
$b-1\leq\lambda_1+\cdots+\lambda_\rho\leq 2b-2$. This would be the case, for 
instance, in Example~\ref{ex:7.3}. Pursuing this question further would be 
beyond the scope of this paper.

\section{Further Remarks}

We recall that by setting $Z=(1,\ldots,1)$, we get the
numerical sequences $C_b^\Lambda(n)=\Omega_{b,T}^\Lambda(n;(1,\ldots,1))$; see
\eqref{2.5}. While it is not the purpose of this paper to study these sequences,
we present one easy but rather surprising result.

\begin{corollary}\label{cor:7.4}
Let $b\geq 2$ be an integer, and let $\Lambda=(b-1,\ldots,b-1)$ with 
$\rho\geq 1$ elements. Then the number of $\Lambda$-restricted $\rho$-colored
$b$-ary partitions is independent of $b$, namely
\begin{equation}\label{7.7}
C_b^\Lambda(n) = \binom{n+\rho-1}{\rho-1},\qquad n=0,1,\ldots
\end{equation}
\end{corollary}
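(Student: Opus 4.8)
The plan is to exploit the product form of the generating function \eqref{2.1} together with the special structure of $\Lambda=(b-1,\dots,b-1)$. When every $\lambda_r=b-1$, each of the $\rho$ factors in \eqref{2.1} becomes the finite geometric sum $1+q^{b^j}+\dots+q^{(b-1)b^j}=\frac{1-q^{b^{j+1}}}{1-q^{b^j}}$, so the generating function telescopes:
\[
\sum_{n=0}^\infty C_b^\Lambda(n)q^n
=\prod_{j=0}^\infty\left(\frac{1-q^{b^{j+1}}}{1-q^{b^j}}\right)^{\!\rho}
=\frac{1}{(1-q)^\rho},
\]
since the partial product up to $j=N$ equals $(1-q^{b^{N+1}})^\rho/(1-q)^\rho$, and $q^{b^{N+1}}\to 0$ as a formal power series. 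The result \eqref{7.7} then follows immediately from the binomial series $(1-q)^{-\rho}=\sum_{n\geq 0}\binom{n+\rho-1}{\rho-1}q^n$. This already shows the count is independent of $b$, as claimed.

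First I would note that $\rho=1$, $\lambda=b-1$ reduces to the statement $S_b^{b-1}(n)=1$ recalled in the introduction, so the corollary is a genuine multicolor generalization of that fact; this gives a useful sanity check. Then I would carry out the telescoping carefully at the level of formal power series, being explicit that the identity $\prod_{j=0}^N(1+q^{b^j}+\dots+q^{(b-1)b^j})=\frac{1-q^{b^{N+1}}}{1-q}$ holds because $b^j$-ary partitions with digits in $\{0,\dots,b-1\}$ are unique $b$-ary representations — equivalently, this is just the statement that every integer $m$ with $0\le m<b^{N+1}$ has a unique base-$b$ expansion of length $N+1$. Raising to the $\rho$th power and letting $N\to\infty$ in the $q$-adic topology gives $(1-q)^{-\rho}$.

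There is no real obstacle here; the only point requiring a line of justification is the passage to the limit $N\to\infty$, which is routine since for any fixed $n$ the coefficient of $q^n$ in $\prod_{j=0}^N(\cdots)^\rho$ stabilizes once $b^{N+1}>n$. Alternatively, one can avoid generating functions entirely: a $\Lambda$-restricted $\rho$-colored $b$-ary partition of $n$ with $\Lambda=(b-1,\dots,b-1)$ assigns to each power $b^j$ a nonnegative integer $c_{j,r}\le b-1$ in color $r$; setting $c_j:=\sum_{r=1}^\rho c_{j,r}$ one checks that the map from partitions to pairs (base-$b$ string $(c_j)$, refinement of each $c_j$ into an ordered $\rho$-tuple summing to $c_j$ with entries $\le b-1$) is a bijection, and then a weak-composition count gives $\binom{n+\rho-1}{\rho-1}$ after summing over all carry patterns. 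I would present the generating-function argument as the main proof since it is shortest, and perhaps remark on the combinatorial bijection as an aside.
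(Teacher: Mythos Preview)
Your generating-function argument is correct and is essentially the paper's own proof: both compute the product in \eqref{2.1} as $(1-q)^{-\rho}$ and then read off the coefficients via the negative binomial series. The only cosmetic difference is that you reach the single-color identity $\prod_{j\ge 0}(1+q^{b^j}+\cdots+q^{(b-1)b^j})=1/(1-q)$ by geometric-sum telescoping, whereas the paper invokes it directly as the uniqueness of base-$b$ expansion (your own parenthetical remark); your combinatorial aside is not needed and, as you note, is only a sketch.
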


\begin{proof}
We use the identity
\begin{equation}\label{7.8}
\prod_{j=0}^\infty\bigg(1+q^{b^j}+q^{2\cdot b^j}\cdots+q^{(b-1)\cdot b^j}\bigg)
= \sum_{n=0}^\infty q^n = \frac{1}{1-q},
\end{equation}
which reflects the fact that each integer $n\geq 0$ has a unique $b$-ary
expansion, or, in other words, $S_b^{b-1}(n)=1$. Now we raise
the left- and the right-most terms in \eqref{7.8} to the power $\rho$ and use
the well-known binomial expansion 
\[
(1-q)^{-\rho} = \sum_{n=0}^\infty\binom{n+\rho-1}{\rho-1}q^n
\]
(see, e.g., \cite[Eq.~(1.3)]{Go}). Then the generating function \eqref{2.1}
immediately gives the desired identity \eqref{7.7}.
\end{proof}

\begin{example}\label{ex:7.5}
{\rm Let $\rho=3$ and $n=3$. When $b=2$, we have $\Lambda=(1,1,1)$, and the 
$\binom{5}{2}=10$ allowable partitions are}

$2_3+1_3,\;2_3+1_2,\;2_3+1_1,\;2_2+1_3,\;2_2+1_2,\;2_2+1_1,\;2_1+1_3,\;
2_1+1_2,\;2_1+1_1,$

\quad $1_3+1_2+1_1.$

\noindent
{\rm For $b=3$, and thus $\Lambda=(2,2,2)$, we have}

$3_3,\;3_2,\;3_1,\;1_3+1_3+1_2,\;1_3+1_3+1_1,\;1_3+1_2+1_2,\;1_3+1_2+1_1,$

\quad $1_3+1_1+1_1,\;1_2+1_2+1_1,\;1_2+1_1+1_1,$

\noindent
{\rm and for any $b\geq 4$ we have the partitions}

$1_3+1_3+1_3,\;1_3+1_3+1_2,\;\ldots,\;1_2+1_1+1_1,\;1_1+1_1+1_1.$

\noindent
{\rm This last example is an instance of the more general situation where 
$n<b$, in which case all partitions have parts 1 with $\rho$ different colors.
These colors may be assigned $b-1\geq n$ times, which means that there
are no restrictions. Therefore we have a one-to-one correspondence between 
such partitions and all non-increasing sequences of length $n$ consisting of
integers $1, 2,\ldots, \rho$. The number of such sequences is a known
combinatorial quantity, namely $\binom{n+\rho-1}{\rho-1}$, consistent with
Corollary~\ref{cor:7.4}.}
\end{example}

Finally we return to the main topic of this paper, namely the multivariate 
polynomials that characterize all $\Lambda$-restricted $\rho$-colored $b$-ary
partitions of an integer $n\geq 1$.

\begin{example}\label{ex:7.6}
{\rm As in Example~\ref{ex:7.5} we let $\rho=3$ and $n=3$. Using any of the 
various methods described in this paper for computing the relevant polynomials,
we obtain the following expressions.

\smallskip
\noindent
(a) Let $b=2$ and $\Lambda=(1,1,1)$. Then with $Z=(x_1; y_1;z_1)$  and 
$T=(r_1;s_1;t_1)$ we get
\begin{align}
\Omega_{2,T}^{(1,1,1)}(3;Z) &= z_1^{1+t_1}+y_1z_1^{t_1}+x_1z_1^{t_1}+
y_1^{s_1}z_1 + y_1^{1+s_1}\label{7.9} \\
&\quad + x_{1} y_{1}^{s_{1}} + x_{1}^{r_{1}} z_{1}+ x_{1}^{r_{1}} y_{1} + 
x_{1}^{1 + r_{1}} + x_{1} y_{1} z_{1}.\nonumber
\end{align}

\smallskip
\noindent
(b) Let $b=3$ and $\Lambda=(2,2,2)$. Then with 
$Z=(x_1,x_2; y_1,y_2;z_1,z_2)$ and $T=(r_1,r_2; s_1,s_2;t_1,t_2)$ we get
\begin{align}
\Omega_{3,T}^{(2,2,2)}(3;Z) &= z_1^{t_1}+y_1^{s_1}+x_1^{r_1}+y_1 z_2+x_1 z_2\label{7.10}\\
&\quad +  y_2 z_1+x_1 y_1 z_1+x_2 z_1+x_1 y_2+x_2 y_1. \nonumber
\end{align}

\smallskip
\noindent
(c)  Let $b\geq 4$ and $\Lambda=(b-1,b-1,b-1)$. Then with     
$Z=(x_1,\ldots,x_{b-1}; y_1,\ldots,y_{b-1};$ $z_1,\ldots,z_{b-1})$ we get
\begin{align}
\Omega_{b,T}^{\Lambda}(3;Z) &= z_{3}+y_{1}z_{2}+x_{1}z_{2}+y_{2}z_{1} + 
x_{1}y_{1}z_{1}\label{7.11} \\
&\quad + x_{2}z_{1} +  y_{3} + x_{1} y_{2} + x_{2} y_{1} + x_{3}.\nonumber
\end{align}
Here in \eqref{7.11} we did not specify $T$ because no exponents other than 1 
occur.

All these polynomials \eqref{7.9}--\eqref{7.11} consist of ten monomials, which
is consistent with Corollary~\ref{cor:7.4} and Example~\ref{ex:7.5}. 
Each of these polynomials characterizes the three sets of partitions listed 
in Example~\ref{ex:7.5}, in the order in which the respective monomials are
given.}
\end{example}

\end{document}